\numberwithin{equation}{section}
\theoremstyle{plain}
\newtheorem{thm}{Theorem}[section]
\newtheorem*{thm*}{Theorem}
\newtheorem{cor}[thm]{Corollary}
\newtheorem{remark}[thm]{Remark}
\newtheorem{defn}[thm]{Definition}
\newtheorem{lemma}[thm]{Lemma}
\newcommand{\diff}{\mathrm{d}}
\newcommand{\RN}{\mathbb{R}^{n}}
\newcommand{\T}{\mathrm{T}}
\DeclareMathOperator{\sgn}{sgn} 
\newcommand{\Y}{\mathcal{Y}}
\newcommand{\cbx}{C_{b}\left(X\right)}
\definecolor{indigo(web)}{rgb}{0.29, 0.0, 0.51}
\definecolor{islamicgreen}{rgb}{0.0, 0.56, 0.0}
\definecolor{ballblue}{rgb}{0.13, 0.67, 0.8}
\definecolor{alizarin}{rgb}{0.82, 0.1, 0.26}
   \def\MR#1{}
\title{A Mass-Conserving Formulation of the Generalized Benjamin-Bona-Mahony-Burgers Equation on Star Networks}
\author{A. George Morgan\footnote{Department of Mathematics,
University of Toronto,
40 St. George St., Room 6290,
Toronto, Ontario, CA,
M5S 2E4. Institutional email: 
adam.morgan@mail.utoronto.ca}
}
\date{\today}
\begin{document}
\maketitle
\abstract
The generalized Benjamin-Bona-Mahony-Burgers equation (gBBMB) describes the flow of blood through a long, viscoelastic artery. In this article we introduce a formulation of gBBMB valid on networks with semi-infinite edges joined at a single junction, with the network's edges corresponding to a segment of the arterial tree. To reflect sudden changes in the material properties of blood vessels, the coefficients of gBBMB are allowed to take different values on each edge of the network. Critically, our formulation ensures that the total mass of the solution to gBBMB is constant in time, even in the presence of dissipation. We also establish local-in-time well-posedness of this new formulation for sufficiently regular initial data. Then, we show how energy methods can be used to extend the local solution to a solution valid for all positive times, provided certain constraints are imposed on the parameters of the model PDE and the network. To build intuition for how waves scatter off the central junction of a network with two edges, we demonstrate the results of some numerical simulations. 
\section{Introduction}
Let $u(x,t)$ represent the deviation from equilibrium of the cross-sectional area of an artery, where $x$ is the axial  coordinate along the artery and $t$ is time. We assume that the artery is impermeable and viscoelastic, and that the blood it conducts is homogeneous and inviscid. Additionally, we suppose the artery is very long so we can treat $x\in \mathbb{R}$. For some constants $\mu>0 ,\alpha, \nu\geq 0,\ \gamma\in [0,1],$ and $p\in \mathbb{N}$, we model $u(x,t)$ as the solution to the generalized Benjamin-Bona-Mahony-Burgers equation (gBBMB),
\begin{equation}\label{eqn:gbbmb}
\left(1-\mu^2\partial_{xx}\right)u_{t} + \partial_{x}\left(\alpha u + \frac{\gamma}{p+1}u^{p+1}\right) -\nu u_{xx} =0.
\end{equation}
The parameter $\mu$ represents the dispersive influence of the arterial wall's  linear elasticity, $\alpha$ represents the influence of linear advection, $\gamma$ represent the influence of nonlinear advection as well as nonlinear wall elasticity, and $\nu$ represents the influence of viscoelastic dissipation. gBBMB has several alternative names in special cases:
\begin{itemize}
\item{ if $p=1, \nu=0$, \eqref{eqn:gbbmb} is called the Benjamin-Bona-Mahony equation (BBM), introduced by Benjamin et al. \cite{BBM1972};}
\item{ if $p=1$ and $\nu>0$ then \eqref{eqn:gbbmb} is called the Benjamin-Bona-Mahony-Burgers equation (BBMB);}
\item{if $\alpha=\mu=0$ and $p=1$, then \eqref{eqn:gbbmb} is called  Burgers' equation}. 
\end{itemize}
Loosely, we can think of $\mu$ as inversely proportional to the rigidity of the arterial walls. This is reasonable from an intuitive standpoint: if the arterial wall is very rigid then dispersion of a wavepacket travelling down the artery costs a great deal of energy, and so dispersion should have a weak effect on the motion. 
\par Our use of gBBMB as a model for flow in a long, thin-walled viscoelastic tube is motivated by the work of Erbay et al. \cite{EED1992}, where an asymptotic expansion in the primitive fluid-structure interaction equations was used to obtain the generalized Korteweg-de Vries-Burgers equation (gKdVB),
\begin{equation}
\label{eqn:gkdvb}
u_{t} + \mu^{2} u_{xxx} + \partial_{x} \left( \frac{\gamma}{p+1} u^{p+1}\right) -\nu u_{xx} =0,
\end{equation}
as an asymptotic model of blood vessel motion in the cases $p=1,2$; see also \cite{Yomosa1987, RP1979, SS1989, MDLP2019, Cascaval2012} for more on simplified model equations in hemodynamics. If $\nu=0$ and $p=1$, \eqref{eqn:gkdvb} becomes the well-known Korteweg-de Vries equation (KdV). Now,  BBM is well-known as a ``substitute" for KdV (see for example \cite{BPS1983}). In particular for $\nu=0$ both equations support solitary wave solutions, which physiologically correspond to coherent blood pulses. However, gKdVB is third-order in space, while gBBMB is only second-order in space. The high order of gKdVB makes its analysis on subintervals of $\mathbb{R}$ somewhat unnatural. For instance, we must impose three boundary conditions to obtain well-posedness of gKdVB on a finite interval, while gBBMB only requires two boundary conditions. Now, the arterial tree of the human body includes several junctions where a ``parent" artery splits into multiple ``child" arteries or subarteries. If we wish to accommodate these splittings into a blood flow model then boundary effects (more appropriately, junction effects) are extremely important, hence this well-posedness issue for gKdVB becomes very relevant. Additionally, the numerical discretization of second-order equations is routine, while higher-order equations are more difficult to handle. Altogether, replacing gKdVB with gBBMB is a well-suited modelling choice; for an alternative justification using perturbative methods in the case $p=1$, see  \cite{MDLP2019}.
\par As referenced in the previous paragraph, modelling blood flow in relatively large subsets of the circulatory system demands we account for the influence of bifurcations (trifurcations, et cetera) in the arterial tree. Accordingly, the suitable formulation of gBBMB on a network, loosely understood for now to mean a collection of subintervals of $\mathbb{R}$ joined together at various points, has scientific merit. The study of blood flow models on networks began, to our knowledge, in $1986$ \cite{ZM1986} and remains of interest to biomedical engineers in modern times (see for example \cite{Alastruey_etal_2011}). Additionally, BBM has been studied on networks previously in \cite{BonaCascaval2008, MR2014, AC2019}.
\par By allowing the coefficients of gBBMB to vary between edges of the network in question we can also investigate how a flow is altered when it moves between two vessels with different elastic properties. Physiologically, the elasticity of a vessel can change due to arteriosclerosis. So, the transition between a healthy artery and an unhealthy one can be modelled by gBBMB on a network with two edges, with the coefficients $\mu, \nu, \gamma$ in \eqref{eqn:gbbmb} taking different values on either edge. Alternatively, the elastic properties of an unhealthy artery can be modified by inserting a small wire or polymer mesh called a stent. Studying gBBMB on a network with variable coefficients can therefore also help us understand how stents affect flow in the arterial tree. The analysis of blood flow in stented vessels has attracted considerable attention: \cite{Canic2002, CGLMTW2019, PMB2017, FPMM2019, FLQ2003} describe several different perspectives on analytical and numerical aspects of stent modelling.
\par Now, when the artery under consideration is very long, the use of a dispersive PDE such as gBBMB as a blood flow model is particularly well-justified: in order for dispersion to be an important influence on the movement of some material continuum, there must be enough room to let waves disperse. Thus we expect gBBMB to be a quality model of flow in the femoral artery, which is reasonably long on a physiological scale. So, we can model the effects of bifurcations, arteriosclerosis, and stents in the femoral artery (and its subarteries) by analyzing solutions to gBBMB on a network. The modelling of stents in the femoral artery is a problem of special interest to modern medical practitioners. Some clinical trials from $2017$ \cite{Nasr_etal_2017, Gouffeic_etal_2017} (see also the comments in \cite{DA2017}) indicate that stenting lesions in the ``common'' (upper) portion of the femoral artery can reduce post-operative complications in certain patients, when compared to surgical techniques like endarterectomy. According to a study from $2019$ \cite{Jia_etal_2019}, however, determining best practices for deciding whether or not to use a stent in the common femoral artery remains a complicated open question. While gBBMB is far too simple to provide solid quantitative answers to this medical problem, it may provide a good toy model for investigations into the basic physics of flow-stent interactions in the femoral artery. For example, studying gBBMB on a network may help determine whether or not stents in the femoral artery obstruct blood flow by reflecting incident pulses off the interface between stented and unstented regions. In the future, predictions made with gBBMB may be benchmarked against both simulations of more realistic model equations and actual patient data to discover if gBBMB indeed provides a useful reduced description of the relevant physics. 
\subsection{Outline of Paper and Relation to Previous Work}
The purpose of this paper is to provide a physically sound formulation of gBBMB on a family of simple networks, to establish the well-posedness of this formulation, and finally to exhibit the results of some numerical experiments based on a classical finite difference scheme. In section \ref{s:formulation}, we define star networks rigorously and justify our choice of compatibility conditions imposed at the junction; in particular, we shall see that the standard Kirchhoff conditions for PDEs on networks do not conserve mass for gBBMB in full generality. In section \ref{s:lwp}, we show how a routine fixed-point strategy can be used to prove local well-posedness for gBBMB. In section \ref{s:gwp}, we use an energy argument to extend local-in-time solutions to global ones (``global'' here means valid on the time interval $(0,\infty)$) in certain interesting special cases. In section \ref{s:num}, we present the results of some numerical simulations of gBBMB (based on \cite{EM1975,EM1977} and operator splitting) to understand how nonlinear waves scatter off the junction of a network with two edges. 
\par The presentation here is largely inspired by Bona and Cascaval's analysis of BBM on trees \cite{BonaCascaval2008}. Aside from our considering the more general gBBMB (with piecewise-constant coefficients) instead of BBM (with constant coefficients), there are a few differences between our approach and that of Bona and Cascaval:
\begin{enumerate}
\item{ Bona and Cascaval allow for networks with edges of finite or infinite length. We have chosen to ignore the finite-length case here, since the main focus of our theoretical and numerical investigations is the effect of the network's central junction on wave propagation.}
\item{In this paper, the compatibility conditions imposed at the junction guarantee that the ``mass" of our solution to gBBMB is conserved, regardless of the values the coefficients of the PDE take on each edge. Conversely, the classical Kirchhoff junction conditions imposed in \cite{BonaCascaval2008} only guarantee mass conservation when viscoelasticity is ignored \emph{and} only the dispersive term's coefficient $\mu$ is allowed to vary from edge to edge. The identification of such mass-conserving junction conditions is the main contribution of this article.}
\item{The fixed-point strategy for constructing local-in-time solutions here is essentially identical to the one proposed in \cite{BonaCascaval2008}. However, the extension of short-time solutions to solutions that exist for all time is not presented explicitly in \cite{BonaCascaval2008}, though it is indicated that \textit{a priori} bounds may be used to establish global-in-time well-posedness. Here, we compute the time derivative of a solution's energy and show that, while in many physically interesting cases this expression is enough to extend local solutions out to arbitrary times, it may fail to provide useful information even for BBM on a general star network.}
\end{enumerate}

\section{Formulation of gBBMB on a Star Network}\label{s:formulation}
\par First, we need to sensibly formulate gBBMB on a class of spatial domains that may include graph-like bifurcations. Towards this goal, we rigorously define the concept of a star network:
\begin{defn}\label{defn:network}
\begin{enumerate} 
\
\item{Let $\left\{e_{i}\right\}_{i=1}^{N}$ be a collection of subintervals of $\mathbb{R}$, such that each $e_{i}$ is either $(-\infty,0]$ or $[0,\infty)$.The \textbf{star network} $X$ associated to this data is the disjoint union of all the intervals $e_{i}$ modulo identifying all $0$'s to a single point.}
\item{The $e_{i}$'s appearing above are called the \textbf{edges} of the network, and the equivalence class of (any) $0\in e_{i}$ is called the \textbf{junction} of the network. }
\item{Any edge that is a copy of $(-\infty,0]$ is said to be \textbf{incoming}, and any edge that is a copy of $[0,\infty)$ is said to be \textbf{outgoing}.}
\end{enumerate}
\end{defn}

\par Definition \ref{defn:network} serves as our model for the femoral artery and its subarteries. 
\par Notice that our definition accounts for neither the curvature of the blood vessels nor the angles between vessels meeting at a junction, so important physics is likely being ignored. Figure \ref{fig:network} illustrates how our definition ignores the physically relevant embedding of a network into Euclidean space. Quantifying the effects of edge curvature and angles between edges, perhaps following the ``limiting'' approach of \cite{NS2015}, could make for interesting future work.
\par Intuitively, the solution to a PDE on a network ought to be viewed as a global object, but since we have no tools to define spatial derivatives at the junction, differential operators  lack an obvious global interpretation. Of course, since our networks are built from subintervals of $\mathbb{R}$, there is no problem in understanding these differential operators on each edge. Thus, when we speak of a PDE on a star network we really mean a system of PDEs defined on each edge $e_{i}$, coupled together by conditions imposed at the junction. With all this in mind, our formulation of gBBMB on a network is thus: let $X$ be a star network with edges $e_{i} \ (i=1,...,N)$ and suppose we are given coefficients 
$$\mu_{i}, \alpha_{i}, \nu_{i}\geq0, \ \gamma_{i}\in [0,1] \quad \forall \ i,$$ then we seek functions $u_{i}(x,t)$ defined for $ (x,t)\in e_{i}\times [0,\infty)$ (with suitable regularity) satisfying the system
\begin{subequations}\label{eqn:rough_gen_sys}
\begin{align}
\hspace{-1cm} \left(1-\mu_{i}^2\partial_{x}^2\right)u_{i,t} +\partial_{x}\left(\alpha_{i} u_{i} + \frac{\gamma_{i}}{p+1}u_{i}^{p+1}\right) -\nu_{i} u_{i,xx} &=0 \quad \text{on} \ (x,t)\in \mathrm{Int}\left(e_{i}\right)\times [0,\infty) \ \forall \ i \\
&+ \ \text{conditions at junction}
\\
&+ \ \text{initial conditions}. 
\end{align}
\end{subequations}
Note that allowing the coefficients of gBBMB to vary from edge to edge is a critical step in modelling the effects of arteriosclerosis or vascular stents: at the interface between a healthy artery and sclerotic or stented artery, the elastic properties of a blood vessel may change \cite{TMMM2008}. 
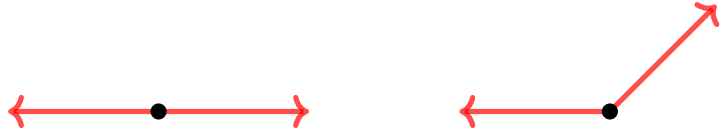
\begin{figure}
\centering
\begin{tikzpicture}
\draw [->, red, line width=2, opacity=0.7] (2,0) -- (0,0);
\draw [->, red, line width=2, opacity=0.7] (2,0) -- (4,0);
\draw [fill] (2,0) circle [radius=0.1];

\draw [->, red, line width=2, opacity=0.7] (8,0) -- (6,0);
\draw [->, red, line width=2, opacity=0.7] (8,0) --(8+1.41421356237,1.41421356237);
\draw [fill] (8,0) circle [radius=0.1];
\end{tikzpicture}
\caption{Two subsets of $\mathbb{R}^2$ corresponding to the same network by the definition used here. The junctions are denoted by black dots, and the arrowheads indicate that the edges each have infinite length.}
\label{fig:network}
\end{figure}
\par We now determine physically appropriate junction conditions for gBBMB.  Let $X$ be a star network with edges $e_{i} \ (i=1,...,N)$, and assume we are given $N$ functions $u_{i}(x,t)$, each solving gBBMB on $e_{i}$. Denote the junction of $X$ by $0$. We need $N$ conditions, one per $u_{i}$, if we want any hope of well-posedness. Continuity, in the sense that for all $i,j$ we have
\begin{equation} \label{eqn:cts_soln}
u_{i}(0,t)= u_{j}(0,t),
\end{equation}
is an obvious and physically well-motivated constraint for gBBMB: we expect the nonlinearity (the $\gamma$-term) in the PDE to be mollified by dispersion (the $\mu$-term) and dissipation (the $\nu$-term), so shock formation appears to be unlikely. Notice that, if our solution $(u_{1},...,u_{N})$ is continuous in the above sense, it defines a continuous function $u(x,t)$ on $X\times[0,\infty)$ satisfying
$$ u(x,t)|_{e_{i}} = u_{i}(x,t) . $$
However, continuity only yields $n-1$ equations, so we need one more constraint. 
\par One common choice to close the system of junction constraints is the Kirchhoff condition, which says that
\begin{equation}\label{eqn:KC}
\sum \mu_{\text{in}}^2 u_{\text{in},x}(0,t) = \sum \mu_{\text{out}}^2 u_{\text{out},x}(0,t) \quad \forall \ t,
\end{equation}
where the subscripts ``in'' and ``out'' represent values on edges where a signal is leaving or arriving, respectively.  The Kirchhoff condition is very popular in the field of PDEs on networks \cite{Mugnolo2014}, and has been applied to BBM on networks in \cite{BonaCascaval2008, MR2014}. According to \cite{BonaCascaval2008}, BBM is well-posed for short times on trees subject to  the demand of a continuous solution and the Kirchhoff condition, as well as regularity requirements on the initial and boundary conditions.
\par In this investigation, however, we use an alternative junction condition that guarantees mass conservation even when viscoelasticity is present and all coefficients are allowed to vary from edge to edge. We call a function $g(x,t, u, u_{i,x}, u_{i,t},...)$ ``globally conserved'' under the evolution of gBBMB if 
$$ \frac{\diff}{\diff t} \sum_{i} \int_{e_{i}} g(x,t, u, u_{i,x}, u_{i,t},...) \ \diff x = 0. $$ 
Let
\begin{equation}
f_{i}(u_{i}) = \alpha_{i} u_{i} + \frac{\gamma_{i}}{p+1} u_{i}^{p+1}
\end{equation}
denote the advective flux on the edge $e_{i}$. Then, by inspection, the solution $u(x,t)$ to gBBMB on $X$ is globally conserved if and only if
\begin{equation}\label{eqn:MC}
\hspace{-1cm}
\sum \left[-\mu_{\text{in}}^2 u_{\text{in},xt} +f_{\text{in}}(u_{\text{in}}) - \nu_{\text{in}} u_{\text{in}, x}\right]_{x=0} =\sum \left[-\mu_{\text{out}}^2 u_{\text{out},xt} +f_{\text{out}}(u_{\text{out}}) - \nu_{\text{out}} u_{\text{out}, x}\right]_{x=0} ,
\end{equation}
where the notation is the same as that used in \eqref{eqn:KC}. Since $$\sum_{i}\int_{e_{i}}u(x,t) \ \diff x$$ physically represents the volume bounded by a network of elastic blood vessels and we assume the blood conducted by our artery has constant density, we can justifiably call \eqref{eqn:MC} the ``mass conservation condition''. This condition tells us that the amount of fluid contained in our system remains constant for all time, a critical constraint to impose from a physical perspective. In the literature, the Kirchhoff condition is sometimes considered equivalent to mass conservation  for BBM. If the $\alpha_{i}$'s and $\gamma_{i}$'s are constant throughout the network and $\nu_{i}=0 \ \forall \ i$ then the Kirchhoff condition does imply the mass conservation. However, the Kirchhoff condition does not even guarantee \emph{edge-wise} mass conservation for general coefficients. In the remainder of this work, therefore, we close the gBBMB system on a network by imposing continuity and the mass conservation condition, rather than the Kirchhoff condition.
\section{Local Well-Posedness}\label{s:lwp}
Let $X$ be a star network with edges $e_{i} \ (i=1,...,N)$. In this section, we prove that gBBMB is locally-in-time well-posed on $X$, subject to the continuity and mass conservation junction conditions.  We view $X$ as having a single incoming edge $e_{1}$ from which signals arrive at the junction, and all other edges are outgoing. This covers two especially significant special cases: 
\begin{itemize}
\item{the case where $X$ has two edges, physically representing a healthy blood vessel sharply transitioning into a sclerotic or stented blood vessel (or vice versa);}
\item{the ``Y-network'' $\Y$ depicted in Figure \ref{fig:y_junction}, where we view the edge $e_{1}$ as a copy of $(-\infty,0]$ and the edges $e_{2}, e_{3}$ as copies of $[0,\infty)$. Such a network serves as a simple model of a bifurcation in the femoral artery.}
\end{itemize}
Only small modifications are required to handle any number of incoming edges, so we ignore such a general setup here for the sake of conceptual clarity. 
\par The forthcoming analysis closely follows the methodology of \cite{BonaCascaval2008} and its antecedents, though as stressed  in the previous section the choice of junction conditions is novel, and applicable in more general circumstances.  
\begin{figure}
\centering
\begin{tikzpicture}[scale=1.65]
\draw [->, gray, line width=3, opacity=1.] (8,0) -- (6,0);
\node[gray] at (7+0.1,0.2) {$e_{1}$};
\draw [->,blue, line width=3, opacity=1.] (8,0) --(8+1.41421356237,1.41421356237);
\node[blue, align=left] at (8+1.41421356237/2-0.3,1.41421356237/2+0.15) {$e_{2}$};
\draw [->,red, line width=3, opacity=1.] (8,0) --(8+1.41421356237,-1.41421356237);
\node[red, align=right] at (8+1.41421356237/2+0.2,-1.41421356237/2+0.1){$e_{3}$};
\draw [fill] (8,0) [radius=0.1] circle;
\end{tikzpicture}
\caption{Diagram of the Y-network $\Y$ introduced at the beginning of Section \ref{s:lwp}. The arrowheads indicate that the edges each have infinite length.} 
\label{fig:y_junction}
\end{figure}
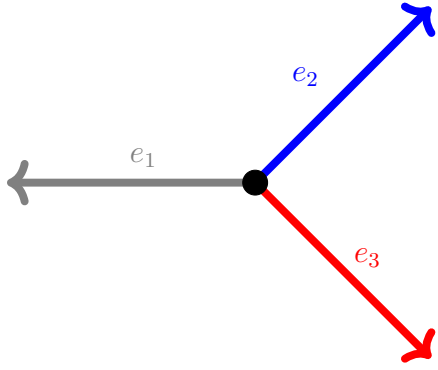
\subsection{Function Spaces}
In this subsection, we briefly review the function spaces required to formulate gBBMB on $X$. All spaces defined here are either well-known in analysis, or have been introduced previously in \cite{BB1973, BonaLuo1995,  BonaCascaval2008}. Let $U\subseteq\RN$ be open and let $T>0$ (we allow $T=+\infty$). Additionally, let $|\cdot|\colon \RN\rightarrow \mathbb{R}$ denote the Euclidean norm. 
\begin{itemize}
 \item{$C^{k}_{b}\left(U\right)$ denotes the space of functions on $U$ whose derivatives up to order $k$ are continuous and bounded; this becomes a Banach space when endowed with the norm 
 \begin{equation}\label{eqn:Ck_norm}
 \| u\|_{C^{k}_{b}\left(U\right)} \doteq \sup_{|\alpha|\leq k}\sup_{U} \left| D^{\alpha}u \right|,
\end{equation}
where $\alpha$ is a multi-index and $D^{\alpha} =\partial^{\alpha_{1}}_{x^{1}}\partial^{\alpha_{2}}_{x^{2}}\cdots$;
 }
 \item{$C_{b}\left( U \right)\doteq C^{0}_{b}\left(U \right)$;}
\item{$L^{p}\left(U\right)$ denotes the Banach space of real-valued functions on $U$ whose absolute values have integrable $p^{\text{th}}$ powers;}
 \item{$H^{k}\left(U\right)$ denotes the Hilbert space of square-integrable, $k$-times weakly differentiable functions on $U$ with square-integrable derivatives;}
 \item{given a Banach space $A$, $C(0,T;A)$ denotes the Banach space of all continuous functions $u\colon[0,T]\rightarrow A$ equipped with the norm 
 \begin{equation}
 \| u \|_{C(0,T;A)} = \sup_{[0,T]} \|u(t)\|_{A}.
 \end{equation}}
  \end{itemize}
Of course, we can also define $C_{b}(U)$ for a not necessarily open set $U\subseteq\RN$, and in this situation $C_{b}\left(U\right)$ remains a Banach space with respect to the sup-norm. 
\par For the analysis of gBBMB on the spatial domain $[0,\infty)$, some other function spaces have appeared in the literature, and we define these now:
\begin{defn}
\begin{align*}
 \mathcal{B}^{k,\ell}_{T} &\doteq \left\{
 u \in C_{b}\left([0,\infty)\times [0,T]\right)
  \ | \ \text{for all} \ 0\leq i\leq k, \ 0\leq j \leq \ell,
   \ \partial_{t}^{k}\partial_{x}^{\ell}u \in C_{b}\left([0,\infty)\times [0,T] \right)\right\}, \quad \text{and}
 \\ 
 \mathcal{B}_{T}&\doteq \mathcal{B}^{0,0}_{T}. 
\end{align*}
\end{defn}
 Finally, we define function spaces specific to the spatial domain $X$:
\begin{defn} 
\begin{align*}
 C_{b}\left(X\right) &\doteq \{(u_{1},...,u_{N})\in \left(C_{b}[0,\infty)\right)^{N}\ | \ u_{1}(0)=\cdots=u_{N}(0)  \}, 
 \\
 L^{2}\left(X\right)  &\doteq \left(L^{2}(0,\infty)\right)^{N}, \quad \text{and}
\\
 H^{1}\left(X\right)  &\doteq \left(H^{1}(0,\infty)\right)^{N} \cap C_{b}\left(X\right) . 
 \end{align*}
 An element of $\cbx$ can be identified with a bounded, continuous function $u\colon X\rightarrow\mathbb{R}$ defined by 
 $$ u|_{e_{i}} \doteq u_{i}.$$
 In this notation, $\cbx$ becomes a Banach space when equipped with the norm
 \begin{equation}
 \| u \|_{\cbx} \doteq \max_{i=1,2,3} \| u_{i}\|_{C[0,\infty)}. 
 \end{equation}
Additionally, $L^{2}\left(X\right),\  H^{1}\left(X\right)$ become Hilbert spaces when equipped with the sum inner product. 
\end{defn}
Note that the Sobolev embedding $H^{1}\left(0,\infty\right)\subseteq C_{b}\left(0,\infty\right) $ has been tacitly used in the above definition: to define $H^{1}\left(X\right)$,  we must know the values of an element of $H^{1}(0,\infty)$ at $x=0$. For a more complete discussion on this point, see \cite{BB1973}*{Proposition $1$}. Additionally, the same Sobolev embedding gives us the inclusion $H^{1}\left(X\right)\subseteq C_{b}\left(X\right) $. 
\subsection{Review of Fixed-Point Formulation of gBBMB on a Half-Line}
In this subsection, we review the ideas behind the proof of local well-posedness of gBBMB posed on $(x,t)\in [0,\infty)^2$. The idea is to express the solution to gBBMB as the fixed point of a certain nonlinear integral operator on $\mathcal{B}_{T}$ for small enough $T>0$. Throughout, we denote the advective flux in gBBMB by 
\begin{equation}
f(u) = \alpha u +\frac{\gamma}{p+1} u^{p+1}. 
\end{equation}
We are interested in solving the following problem: for given $h(t)\in C[0,\infty), \ \varphi(x)\in C_{b}[0,\infty)$, find $T>0$ and $u(x,t) \in \mathcal{B}^{1,2}_{T}$ such that 
\begin{subequations}
\label{eqn:gbbmb_qp}
\begin{align}
(1-\mu^2\partial_{x}^2)u_{t} + \left(f(u)\right)_{x} - \nu u_{xx} &=0 \quad \forall \ (x,t) \in (0,\infty)^2,
\\ 
u(0,t) &= h(t) \quad \forall \ t\in [0,\infty),
\\ 
u(x,0) &= \varphi(x) \quad \forall \ x\in [0,\infty). 
\end{align}
\end{subequations}
To recast the above system as a fixed point problem, we use the Green's function of $1-\mu^{2}\partial_{x}^2$:
\begin{lemma}\label{lemma:green_fnc} Let $\delta(x-y)$ denote the Dirac function centred at $y\in \mathbb{R}$. 
The function
\begin{equation} 
G(x,y) \doteq -\frac{1}{2\mu} \left(e^{\frac{-(x+y)}{\mu}}- e^{-\frac{|x-y|}{\mu}}\right)\colon [0,\infty)^2\rightarrow \mathbb{R}
\end{equation}
satisfies the PDE
\begin{equation}
\left(1-\mu^2\partial_{x}^2\right)G(x,y) =\delta(x-y) \quad \forall \ (x,y)\in (0,\infty)^2
\end{equation}
in the sense of distributions, with $G(0,y)=0$ and $\lim_{x\rightarrow \infty} G(x,y)=0 \ \forall \ y \in [0,\infty).$  
\end{lemma}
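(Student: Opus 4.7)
The plan is to verify the claimed formula for $G(x,y)$ by the classical method of images. On all of $\mathbb{R}$, the function $K(x) \doteq \frac{1}{2\mu} e^{-|x|/\mu}$ is the fundamental solution of $1-\mu^2\partial_x^2$, and to enforce the Dirichlet boundary condition $G(0,y) = 0$ one places a fictitious image source at $-y$, yielding the candidate
\[
G(x,y) = K(x-y) - K(x+y) = \frac{1}{2\mu}\bigl(e^{-|x-y|/\mu} - e^{-(x+y)/\mu}\bigr),
\]
which rearranges to the formula in the statement.

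First I would verify that $K$ is the whole-line fundamental solution. Away from the origin, direct differentiation gives $K - \mu^2 K'' = 0$; the $\delta$-source is produced by the jump $K'(0^+) - K'(0^-) = -1/\mu^2$, which one unwinds via integration by parts against a test function $\varphi \in C_c^{\infty}(\mathbb{R})$ to obtain $\langle (1-\mu^2\partial_x^2)K, \varphi\rangle = \varphi(0)$. Next, for $(x,y)$ in the open first quadrant $(0,\infty)^2$, the image contribution $K(x+y)=\frac{1}{2\mu}e^{-(x+y)/\mu}$ is smooth and satisfies $(1-\mu^2\partial_x^2)K(x+y) = 0$ pointwise, since its singular locus $\{x+y=0\}$ lies outside this region. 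Hence, in the sense of distributions on $(0,\infty)^2$,
\[
(1-\mu^2\partial_x^2) G(x,y) = \delta(x-y) - 0 = \delta(x-y),
\]
as required.

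The remaining properties are immediate: for $y \geq 0$ the quantities $|0-y|$ and $0+y$ coincide, so the two exponentials cancel at $x=0$, giving $G(0,y) = 0$; and as $x \to \infty$ with $y$ fixed both $e^{-|x-y|/\mu}$ and $e^{-(x+y)/\mu}$ decay to zero. I do not anticipate any serious obstacle here: the only point requiring genuine care is the distributional identity at $x=y$, and that step is entirely standard once the whole-line fundamental solution is in hand. Everything else reduces to direct substitution and the evenness of $K$.
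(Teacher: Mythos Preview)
Your argument is correct: the method of images with the whole-line fundamental solution $K(x)=\frac{1}{2\mu}e^{-|x|/\mu}$ is exactly the standard route, and your checks of the distributional identity at $x=y$, the Dirichlet condition at $x=0$, and the decay as $x\to\infty$ are all valid. The paper itself does not give a proof of this lemma at all (it is simply marked with a \qed), so your write-up is in fact more detailed than what appears there; nothing needs to change.
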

\qed \\
In light of Lemma \ref{lemma:green_fnc}, we may rewrite the PDE in \eqref{eqn:gbbmb_qp} as 
\begin{align} \label{eqn:ut}
u_{t} = h'(t)e^{-\frac{x}{\mu}} + \int_{0}^{\infty} G(x,y) \ \left(\left(-f(u(y,s))\right)_{y} + \nu u_{yy}(y,s)\right) \ \diff y.
\end{align}Integrating by parts and solving a linear first order ODE in time, we arrive at the fixed point problem \cite{BonaLuo1995}*{Equations $3.3$-$3.8$}
\begin{subequations}
\begin{align}
K(x,y) &\doteq \frac{1}{2\mu^2}  \left(e^{\frac{-(x+y)}{\mu}} +\sgn(x-y) \ e^{-\frac{|x-y|}{\mu}}\right),
\\ 
\mathbb{B}_{\text{adv}}[u](x,t) &\doteq  \int_{0}^{t}\int_{0}^{\infty} e^{-\frac{\nu}{\mu^2}(t-s)} \ K(x,y) \ f(u(y,s)) \ \diff y \ \diff s,
\\ 
\mathbb{B}_{\text{visc}}[u](x,t) &\doteq  \frac{\nu}{\mu^2} \int_{0}^{t}\int_{0}^{\infty} e^{-\frac{\nu}{\mu^2}(t-s)} \ G(x,y) \ u(y,s) \ \diff y \ \diff s,
\\ 
u(x,t) &= e^{-\frac{\nu t}{\mu^2}}\varphi(x) + \left(h(t)-h(0)e^{-\frac{\nu t}{\mu^2}}\right) e^{-\frac{x}{\mu}}  \label{eqn:u_fix_op}
\\ &\phantom{=}+\mathbb{B}_{\text{adv}}[u](x,t) + \mathbb{B}_{\text{visc}}[u](x,t). \nonumber
\end{align}
\end{subequations}
We have chosen the notation $\mathbb{B}_{\text{adv}}$ and $\mathbb{B}_{\text{visc}}$ because, when viscoelasticity is ignored, $\nu=0$ and $\mathbb{B}_{\text{visc}}[u]\equiv 0$. Additionally, $\mathbb{B}_{\text{adv}}$ contains all information on how advection affects the dynamics.  
\par If we treat the right-hand side of \eqref{eqn:u_fix_op} as a nonlinear operator on $\mathcal{B}_{T}$, we can use the contraction mapping theorem to argue that a solution $u$ to \eqref{eqn:gbbmb_qp} exists for some time $T$, provided $h$ and $\varphi$ admit enough derivatives \cite{BonaLuo1995}*{Proposition $3.1$, Lemma $3.3$}. The particular form of the nonlinear integral operator appearing in the proof allows one to conclude that the fixed point is actually in $\mathcal{B}^{1,2}_{T}$, and therefore the fixed point is a classical solution of gBBMB. 
\par We conclude by stating the following lemma, which helps some calculations in the next subsection: 
\begin{lemma}\label{lemma:xt_der}
 If $u(x,t)$ solves \eqref{eqn:gbbmb_qp}, then
\begin{align*}
f(h(t)) - \left[\mu^2 u_{xt} + \nu u_{x}\right]_{x=0} = \mu h'(t) +  \frac{\nu}{\mu} h(t) - \frac{1}{\mu}\int_{0}^{\infty} e^{\frac{-y}{\mu}} \left[f(u(y,t))-\frac{\nu}{\mu}u(y,t)\right] \ \diff y. 
\end{align*}
\end{lemma}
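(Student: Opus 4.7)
The plan is to derive the stated identity by extracting a scalar relation from the PDE in \eqref{eqn:gbbmb_qp} via an exponentially weighted integration against the kernel $e^{-x/\mu}$. This kernel is natural here: it is the unique bounded solution on $[0,\infty)$ of $(1-\mu^2\partial_x^2)w=0$ with $w(0)=1$ (equivalently, $\mu^2 G_x(0,y)=e^{-y/\mu}$, where $G$ is the Green's function of Lemma \ref{lemma:green_fnc}), so multiplying the PDE by it isolates precisely the boundary data at $x=0$ that appears in the lemma.

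Concretely, I multiply \eqref{eqn:gbbmb_qp}(a) by $e^{-x/\mu}$ and integrate in $x$ over $[0,\infty)$, then treat the three resulting pieces separately. For the dispersive piece $\int_0^\infty e^{-x/\mu}(u_t-\mu^2 u_{xxt})\,\diff x$, I integrate the $\mu^2 u_{xxt}$ term by parts twice; the bulk integrals of $e^{-x/\mu}u_t$ arising from each pair cancel, and the only survivors are boundary contributions at $x=0$, yielding $\mu h'(t)+\mu^2 u_{xt}(0,t)$. For the advective piece, one integration by parts gives
\[
\int_0^\infty e^{-x/\mu}\bigl(f(u)\bigr)_x\,\diff x=-f(h(t))+\tfrac{1}{\mu}\int_0^\infty e^{-x/\mu}f(u)\,\diff x.
\]
For the dissipative piece, two integrations by parts (using $u(0,t)=h(t)$) produce
\[
-\nu\int_0^\infty e^{-x/\mu}u_{xx}\,\diff x=\nu u_x(0,t)+\tfrac{\nu h(t)}{\mu}-\tfrac{\nu}{\mu^2}\int_0^\infty e^{-x/\mu}u\,\diff x.
\]
Adding the three and equating to zero, then solving for $f(h(t))-[\mu^2 u_{xt}+\nu u_x]_{x=0}$, gives the claimed identity.

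An essentially equivalent route is to start from the representation formula \eqref{eqn:ut}, differentiate in $x$, set $x=0$ using the explicit value $G_x(0,y)=\mu^{-2}e^{-y/\mu}$, and integrate by parts on the $(f(u))_y$ and $u_{yy}$ factors against $e^{-y/\mu}$. The two approaches produce identical boundary terms and are really the same calculation organized differently; I would present whichever is cleaner, probably the direct multiplication-and-IBP version above, since it avoids quoting the representation formula.

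The only subtlety—and what I would call the main (small) obstacle—is justifying that all boundary contributions at $x=\infty$ vanish in each integration by parts. This is where the hypothesis $u\in\mathcal{B}^{1,2}_T$ enters: it supplies boundedness of $u,u_x,u_t,u_{xt},u_{xx}$, and since $f$ is polynomial, $f(u)$ is bounded as well; the exponential factor $e^{-x/\mu}$ then drives each boundary term to zero. Once this is verified, the rest is bookkeeping of signs (a plausible typo in the sign of the final integral notwithstanding), and no fixed-point or regularity machinery beyond what is already in place is needed.
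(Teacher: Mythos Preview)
Your proposal is correct and aligns with the paper's proof. The paper's one-line argument is precisely your ``essentially equivalent route'': differentiate \eqref{eqn:ut} in $x$, evaluate at $x=0$ (using $G_x(0,y)=\mu^{-2}e^{-y/\mu}$), and integrate by parts to remove the $y$-derivatives from the integrand. Your primary route---multiplying the PDE by $e^{-x/\mu}$ and integrating---is the same calculation organized without passing through the Green's-function representation, and you correctly observe the two are equivalent; your remark about the sign of the integral term is also apt.
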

\begin{proof}
Differentiate both sides of \eqref{eqn:ut} with respect to $x$, then integrate by parts to get rid of all the derivatives in the integrand. 
\end{proof}
\subsection{Fixed-Point Formulation of gBBMB on $X$}
We now adapt the techniques from the previous subsection to prove local well-posedness of gBBMB on a star network $X$ with $N$ infinitely long edges $e_{i}$. Recall that we want to focus on the case of one incoming edge $e_{1}$, from which a signal arrives at the junction and scatters off into the other edges. Let
\begin{equation}
f_{i}(u_{i}) = \alpha_{i} u_{i} + \frac{\gamma_{i}}{p+1} u_{i}^{p+1}
\end{equation}
denote the advective flux on the edge $e_{i}$. For $\varphi\in C_{b}\left(X\right)$ with $\varphi_{i}\doteq \varphi|_{e_{i}}$, our formulation of gBBMB on $X$ then reads
\begin{subequations}\label{eqn:nl_y_junction_sys}
\begin{align}
\left(1-\mu_{1}^2\partial_{x}^2\right)u_{1,t} +\left(f_{1}(u_{1})\right)_{x} -\nu_{1}u_{1,xx}&=0 \quad \text{on} \ (x,t)\in(-\infty,0)\times (0,\infty),
 \\
\left(1-\mu_{i}^2\partial_{x}^2\right)u_{i,t} +\left(f_{i}(u_{i})\right)_{x} -\nu_{i}u_{i,xx} &=0 \quad \text{on} \ (x,t)\in  (0,\infty)^2, \  i=2,..., N, 
\\
\mu_{1}^2 \ u_{1,xt}(0,t) -f_{1}(u_{1}(0,t)) +\nu_{1} u_{1,x}(0,t) &= \sum_{i= 2}^{N} \mu_{i}^2 \ u_{i,xt}(0,t) \nonumber
\\ 
&\phantom{=} -f_{i}(u_{i}(0,t)) +\nu_{i} u_{i,x}(0,t)  \quad \forall \ t\in [0,\infty), 
 \label{eqn:nl_y_junction_mass}
\\ 
u_{i}(0,t)&=u_{j}(0,t) \quad \forall \ i, j =1,...,N, \ t\in [0,\infty), \label{eqn:nl_y_junction_cts} \\ 
u_{i}(x,0) &= \varphi_{i}(x) \quad \forall \ x\in e_{i}, \ i=1,...,N. 
\end{align}
\end{subequations}
Assuming a classical solution $(u_{1},u_{2},..., u_{N})$ to \eqref{eqn:nl_y_junction_sys} exists for some time $T$, let $$u(x,t)\in C\left(0,T, C_{b}\left(X\right)\right)$$ be defined by
$$ u(x,t)|_{e_{i}}= u_{i}(x,t). $$

\par We attack this problem by casting \eqref{eqn:nl_y_junction_sys} as a fixed-point problem on $C\left(0,T, C_{b}\left(X\right)\right)$. To do this, we write out the integral form of gBBMB on each $e_{i}$ in terms of the \textit{a priori} unknown common junction value 
$$ h(t) \doteq u_{1}(0,t)=\cdots= u_{N}(0,t). $$ 
Then, we use the mass conservation condition to write out a linear initial-value problem for $h(t)$ with $u$-dependent forcing, which is trivially solvable in terms of $u$. 
\par As in \cite{BonaCascaval2008}, we start by changing variables $x\mapsto -x$ in $e_{1}$ to make sure all $u_{i}$'s are defined on the same spatial domain $[0,\infty)$. Letting $\sigma_{i}=-1$ if $i=1$ and $\sigma_{i}=1$ otherwise, the integral form of gBBMB on $e_{i}$ can be written as 
\begin{align}
u_{i}(x,t) &= e^{-\frac{\nu_{i} t}{\mu_{i}^2}}\varphi_{i}(x) + \left(h(t)-h(0)e^{-\frac{\nu_{i} t}{\mu_{i}^2}}\right) e^{-\frac{x}{\mu_{i}}} +\sigma_{i}  \mathbb{B}_{\text{adv},i}[u_{i}](x,t) + \mathbb{B}_{\text{visc},i}[u_{i}](x,t).
\end{align}
Adapting Lemma \ref{lemma:xt_der} gives 
\begin{align}\label{eqn:xt_der_i}
\hspace{-1cm}
 \sigma_{i} f_{i}(h(t))-\left[\mu_{i}^2 u_{i,xt}+ \nu_{i} u_{i,x}\right]_{x=0} = \mu_{i} h'(t) +  \frac{\nu_{i}}{\mu_{i}} h(t) - \frac{1}{\mu_{i}}\int_{0}^{\infty} e^{\frac{-y}{\mu_{i}}} \left[\sigma_{i}f_{i}(u_{i}(y,t))-\frac{\nu_{i}}{\mu_{i}}u_{i}(y,t)\right] \ \diff y. 
\end{align}
Now, after changing variables, we can write \eqref{eqn:nl_y_junction_mass} as
\begin{equation} \label{eqn:mass_cons_Y}
\sum_{i} \sigma_{i} f_{i}(h(t))- \left[\mu_{i}^2 u_{i,xt} + \nu_{i} u_{i,x}\right]_{x=0} =0. 
\end{equation}
Combining this with \eqref{eqn:xt_der_i} and defining $\mu_{*}\doteq \sum_{i}\mu_{i}, \ \nu_{*}\doteq\sum_{i}\frac{\nu_{i}}{\mu_{i}}$, we get 
\begin{align}\label{eqn:h_ode_full}
h'(t) +\frac{\nu_{*}}{\mu_{*}} h(t) &= \sum_{i} \frac{1}{\mu_{i}\mu_{*} } \int_{0}^{\infty} e^{\frac{-y}{\mu_{i}}} \left[\sigma_{i}f_{i}(u_{i}(y,t))-\frac{\nu_{i}}{\mu_{i}}u_{i}(y,t)\right] \ \diff y.
\end{align}
Finding the junction value $h(t)$ thus amounts to solving a linear, parameterized (by $u$) ODE \eqref{eqn:h_ode_full} subject to the initial condition $h(0)=\varphi(0)$. This is trivial, however: 
\begin{equation}\label{eqn:h_expl}
h(t)=\varphi(0)e^{-\frac{\nu_{*}t}{\mu_{*}}} + \sum_{i} \frac{1}{\mu_{i}\mu_{*}} \int_{0}^{t} \int_{0}^{\infty} e^{-\left(\frac{\nu^{*}}{\mu_{*}}(t-s) + \frac{y}{\mu_{i}}\right)} \left[\sigma_{i}f_{i}(u_{i}(y,s))-\frac{\nu_{i}}{\mu_{i}}u_{i}(y,s)\right] \ \diff y \ \diff s. 
\end{equation}
Let 
\begin{equation}
\Phi[u] \doteq h(t)-\varphi(0),  
\end{equation}
with $h(t)$ given by \eqref{eqn:h_expl}. Then, we may write the fixed-point formulation of gBBMB on $X$ as follows: find $u_{i} \ (i=1,...,N)$ such that 
\begin{equation}\label{eqn:fixed_y_junc_i}
\hspace{-1cm}
u_{i}(x,t) = e^{-\frac{\nu_{i} t}{\mu_{i}^2}}\varphi_{i}(x) + \left(\Phi[u]+\varphi(0)\left(1-e^{-\frac{\nu_{i} t}{\mu_{i}^2}}\right)\right) e^{-\frac{x}{\mu_{i}}} +\sigma_{i}  \mathbb{B}_{\text{adv},i}[u_{i}](x,t) + \mathbb{B}_{\text{visc},i}[u_{i}](x,t). 
\end{equation}
Notice how coupling between individual edges is described entirely by $\Phi[u]$. 
\par Now, we are at last ready to state and prove our main theorem for this section.  
\begin{thm}\label{thm:gbbmb_lwp} Given $\varphi\in C_{b}\left(X\right)$ with $\varphi_{i}\in C^{2}_{b}(0,\infty)$ for each $i$, there exists a $T>0$ and a unique $u\in C\left(0,T; C_{b}\left(X\right)\right)$ such that $u(x,t)$ is a classical solution to gBBMB on $X$ satisfying the mass conservation condition \eqref{eqn:MC}. Further, $u|_{e_{i}}\in \mathcal{B}^{1,2}_{T} \ \forall \ i$ and $u$ depends continuously on the initial data $\varphi$. 
\end{thm}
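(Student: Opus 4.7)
The plan is to realize $u=(u_1,\dots,u_N)$ as the fixed point of the map $\mathcal{T}\colon C(0,T;C_b(X))\to C(0,T;C_b(X))$ whose $i$-th component is the right-hand side of \eqref{eqn:fixed_y_junc_i}, via the Banach contraction principle. The argument parallels the half-line scheme recalled in Section 3.2, with the only essentially new ingredient being the coupling functional $\Phi[u]$ defined through \eqref{eqn:h_expl}.

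I would first verify that $\mathcal{T}$ is well-defined on $C(0,T;C_b(X))$. The exponential terms inherit boundedness and continuity from $\varphi$; the operators $\mathbb{B}_{\mathrm{adv},i}$ and $\mathbb{B}_{\mathrm{visc},i}$ are handled exactly as in \cite{BonaLuo1995}; and $\Phi[u](t)$ is continuous in $t$ by dominated convergence, with $|\Phi[u](t)|$ controlled using the trivial estimate $\int_0^\infty e^{-y/\mu_i}\,\diff y=\mu_i$ together with local boundedness of $f_i$. A direct evaluation at $x=0$ using the identities $K(0,y)=G(0,y)=0$ shows that $(\mathcal{T}u)_i(0,t)=\varphi(0)+\Phi[u](t)$ takes the same value for every $i$, so any element in the image of $\mathcal{T}$ satisfies the continuity junction condition \eqref{eqn:nl_y_junction_cts} automatically.

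Next, fix $R$ somewhat larger than $\|\varphi\|_{C_b(X)}$ and consider the closed ball $B_R\subset C(0,T;C_b(X))$. The goal is to show $\mathcal{T}(B_R)\subseteq B_R$ and that $\mathcal{T}$ is a strict contraction on $B_R$ provided $T$ is small. Since $f_i$ is a polynomial of degree $p+1$, the bounded exponential kernels appearing in $\mathbb{B}_{\mathrm{adv},i}$ and $\mathbb{B}_{\mathrm{visc},i}$ yield $O(T)$ bounds and $O(T)$ Lipschitz constants on $B_R$. For the genuinely new piece, \eqref{eqn:h_expl} gives $\|\Phi[u]\|_\infty\le |\varphi(0)|(1-e^{-\nu_* T/\mu_*})+C(R)T$ and $\|\Phi[u]-\Phi[v]\|_\infty \le C(R)T\,\|u-v\|_{C(0,T;C_b(X))}$ by exploiting local Lipschitz continuity of $f_i$ on bounded sets. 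Collecting the estimates and shrinking $T$ closes both the invariance and the contraction.

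Banach's theorem then delivers a unique fixed point $u\in B_R$. To upgrade it to a classical solution, I would differentiate \eqref{eqn:fixed_y_junc_i} under the integral sign: the hypothesis $\varphi_i\in C_b^2(0,\infty)$ and the smoothing obtained by pairing $K(x,y)$ and $G(x,y)$ against the resulting $L^1_y$ data provide enough regularity to conclude $u|_{e_i}\in\mathcal{B}_T^{1,2}$ and to recover the PDE \eqref{eqn:nl_y_junction_sys} pointwise on the interior of each edge, as in \cite{BonaLuo1995}. The mass conservation condition \eqref{eqn:nl_y_junction_mass} is built into the derivation of $\Phi$ from \eqref{eqn:mass_cons_Y}, so it holds automatically. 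Continuous dependence follows from a standard two-data comparison: applying the same contraction estimates to solutions $u,\tilde u$ with initial data $\varphi,\tilde\varphi$ yields $\|u-\tilde u\|\le C\|\varphi-\tilde\varphi\|+qT\|u-\tilde u\|$ with $q<1$, and the $qT$-term is absorbed on the left. The main obstacle throughout is the functional $\Phi[u]$: a priori its dependence on the full time history could spoil the contraction, but the outer $s$-integral in \eqref{eqn:h_expl} supplies the $O(T)$ smallness needed to close the argument.
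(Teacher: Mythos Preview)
Your proposal is correct and follows essentially the same contraction-mapping strategy as the paper's proof: you define the operator $\Psi$ (your $\mathcal{T}$) on a ball in $C(0,T;C_b(X))$, exploit the $O(T)$ smallness coming from the outer time integrals in $\mathbb{B}_{\mathrm{adv},i}$, $\mathbb{B}_{\mathrm{visc},i}$, and $\Phi[u]$ to obtain invariance and contraction, and then read off regularity and continuous dependence from the structure of the fixed-point equation. The paper's sketch is slightly more explicit about the quantitative conditions \eqref{eqn:B_to_B_condition}--\eqref{eqn:fp_require} and mentions a bootstrap for uniqueness outside the ball $B_R$, but these are routine additions to what you have written.
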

\begin{proof} (Sketch)
We begin by choosing any $T>0$. For brevity, let us define $$A\doteq C\left(0,T; C_{b}\left(X\right)\right).$$ We then pick any $R>0$ and define 
\begin{equation}\label{eqn:B_defn}
B\doteq B(0,R) \subseteq A;
\end{equation}
correct choices of $R$ and $T$ emerge naturally in the course of the proof. Let $\Psi\colon A\rightarrow A$ be defined by 
\begin{equation}
\hspace{-1cm}
\Psi[u]|_{e_{i}} \doteq e^{-\frac{\nu_{i} t}{\mu_{i}^2}}\varphi_{i}(x) + \left(\Phi[u]+\varphi(0)\left(1-e^{-\frac{\nu_{i} t}{\mu_{i}^2}}\right)\right) e^{-\frac{x}{\mu_{i}}} +\sigma_{i}  \mathbb{B}_{\text{adv},i}[u_{i}](x,t) + \mathbb{B}_{\text{visc},i}[u_{i}](x,t). 
\end{equation}
We construct our local solution as a fixed point of $\Psi$. $\Psi$ maps $B$ to itself provided 
\begin{equation}\label{eqn:B_to_B_condition}
\left\|\varphi\right\|_{\cbx} + TR\left( 1+ R^{p}\right) \leq c_{1}R
\end{equation}
for some constant $c_{1}$ depending only on the parameters $\mu_{i}, \alpha_{i}, \gamma_{i}$, and  $\nu_{i}$. 
Further, $\Psi$ is a contraction mapping if
\begin{equation}\label{eqn:contr_condition}
T \ \left( 1 + R^{p}\right) < c_{2} 
\end{equation}
where $c_{2}$ is a constant depending on $\mu_{i}, \alpha_{i}, \gamma_{i}$, and  $\nu_{i}$. The two constraints \eqref{eqn:B_to_B_condition} and \eqref{eqn:contr_condition} are satisfied if we choose
\begin{subequations}\label{eqn:fp_require}
\begin{align}
R &\geq \max\left\{1, \frac{2}{c_{1}} \right\} \ \| \varphi \|_{\cbx}\quad \text{and}
\\ 
T &<  \frac{\min\left\{ \frac{c_{1}}{2}, c_{2}\right\}}{1+R^{p}}.
\end{align}
\end{subequations}
Now, apply the contraction mapping theorem to see that $\Psi$ has a unique fixed point in $B$ if $R$ and $T$ satisfy \eqref{eqn:fp_require}. Unconditional uniqueness of the fixed point can be established by a routine bootstrap argument. From the definition of $\Psi$, this fixed point depends continuously on $\varphi$. Due to the nested integrals in the definition of $\Psi$, the claimed regularity of the fixed point given a smooth enough $\varphi$ is also obvious. We conclude that the fixed point is actually a classical solution to gBBMB. 
\end{proof}
\section{Global Well-Posedness via Energy Methods}\label{s:gwp}
 We now determine if and when our local-in-time solution may be extended to exist for an arbitrary time. Specifically, we show that, given sufficiently regular initial data, the local-in-time solution $u(x,t)$ to \eqref{eqn:nl_y_junction_sys} obtained from  Theorem \ref{thm:gbbmb_lwp} satisfies $u(\cdot, t)\in H^{1}\left(X\right) \ \forall \ t \in [0,T]$. Following this, we compute the time evolution of the energy (squared $H^{1}\left(X\right)$-norm) of our solution, which in turn allows us to prove well-posedness for \eqref{eqn:nl_y_junction_sys} in $C_{b}\left([0,T), H^{1}\left(X\right)\right)$ for \emph{any} $T>0$ in the following two cases: 
\begin{itemize}
\item{$p$ is even and certain physically relevant restrictions are imposed on the coefficients $\alpha_{i}$ and $\gamma_{i}$;}
\item{ $\sum \sigma_{i}\alpha_{i} = \sum \sigma_{i}\gamma_{i}=0$.}
\end{itemize}

\par Throughout this subsection, let $X$ be a star network with edges $e_{i} \ (i=1,...,N)$. We denote our initial data by $\varphi\in C_{b}\left(X\right)$ with $\varphi_{i}\in C^{2}_{b}[0,\infty)$ for each $i$. Finally, we let $u(x,t)$ denote the classical solution to gBBMB on $X$ valid up to time $T= \mathcal{O}\left( \left(1+\| \varphi\|_{C_{b}\left(X\right)}\right)^{-p}\right)$ whose existence is guaranteed by Theorem \ref{thm:gbbmb_lwp}. Additionally, we remind the reader that Sobolev embedding yields $H^{1}\left(X\right)\subseteq C_{b}\left(X\right)$. 
\par First, we need a helpful lemma characterizing the far-field behaviour of solutions to  \eqref{eqn:nl_y_junction_sys}. This result can be obtained by adapting the proof of Lemma $3$ in \cite{BB1973}:
\begin{lemma}\label{lemma:nullity}
Assume that all $\varphi_{i}$'s and their derivatives converge to $0$ as $x\rightarrow \infty$. Then, the functions $u_{i}(x,t)$ and all their derivatives converge to $0$ as $x\rightarrow \infty$, uniformly in $t$. 
\end{lemma}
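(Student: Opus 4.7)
The plan is to propagate the far-field decay hypothesis on $\varphi$ through the fixed-point iteration used to prove Theorem \ref{thm:gbbmb_lwp}. Let $C_{0}[0,\infty)\subseteq C_{b}[0,\infty)$ denote the closed subspace of continuous functions vanishing at infinity, and define
\begin{equation*}
A_{0} \doteq \left\{ v\in C(0,T;\cbx) \ \middle| \ v|_{e_{i}}(\cdot,t)\in C_{0}[0,\infty) \text{ uniformly in } t\in[0,T],\ \forall\, i\right\}.
\end{equation*}
A standard $\epsilon/2$ argument shows $A_{0}$ is closed in $A\doteq C(0,T;\cbx)$, so $B_{0}\doteq B\cap A_{0}$, with $B$ as in \eqref{eqn:B_defn}, is a closed subset of the ball on which $\Psi$ is a contraction. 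The main step is to prove $\Psi(B_{0})\subseteq B_{0}$; once this is established, $\Psi|_{B_{0}}$ is a contraction on a nonempty closed subset of a complete metric space, so it has a fixed point in $B_{0}$, which by uniqueness coincides with the solution $u$ produced in Theorem \ref{thm:gbbmb_lwp}.

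To verify $\Psi(B_{0})\subseteq B_{0}$, I would inspect each of the four summands of $\Psi[u]|_{e_{i}}$ in turn. The term $e^{-\nu_{i}t/\mu_{i}^{2}}\varphi_{i}(x)$ vanishes at infinity by hypothesis on $\varphi_{i}$, and $\bigl(\Phi[u]+\varphi(0)(1-e^{-\nu_{i}t/\mu_{i}^{2}})\bigr)e^{-x/\mu_{i}}$ decays through the explicit $e^{-x/\mu_{i}}$ factor together with uniform boundedness of $\Phi[u]$ for $u\in B$. For the convolution terms $\mathbb{B}_{\text{adv},i}[u_{i}]$ and $\mathbb{B}_{\text{visc},i}[u_{i}]$, the key trick is to split the spatial integral as $\int_{0}^{\infty}=\int_{0}^{x/2}+\int_{x/2}^{\infty}$: on $[0,x/2]$ the kernel is bounded by a constant multiple of $e^{-x/(2\mu_{i})}$ and contributes exponentially small terms, while on $[x/2,\infty)$ the integrand is majorised by a constant times $\sup_{y\geq x/2,\ s\in[0,T]}|u_{i}(y,s)|$, which tends to $0$ as $x\to\infty$ precisely because $u\in B_{0}$.

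Decay of derivatives follows the same blueprint applied to the formally differentiated integral equation. Differentiating in $x$, the jump of $\sgn(x-y)$ in $K(x,y)$ contributes a boundary term proportional to $f_{i}(u_{i}(x,t))$ which decays by the previous step, together with a regular part carrying the same exponential-kernel structure to which the splitting argument applies verbatim. Higher $x$-derivatives proceed by induction, and time derivatives can be recovered from the PDE by applying $(1-\mu_{i}^{2}\partial_{x}^{2})^{-1}$, i.e.\ convolution against the Green's function $G$ of Lemma \ref{lemma:green_fnc}, to $-(f_{i}(u_{i}))_{x}+\nu_{i}u_{i,xx}$; $G$-convolution preserves far-field decay by the same estimate.

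The principal obstacle is exactly the lack of uniform decay of the kernels $K$ and $G$ in $x$: they retain a non-vanishing peak along the diagonal $y=x$, so one cannot conclude that $\Psi[u](x,t)\to 0$ as $x\to\infty$ by dominated convergence alone. The half-line splitting, combined with the bootstrap through the contraction mapping theorem, is the device that circumvents this and is the essential adaptation of \cite{BB1973}*{Lemma $3$} to the present setting. Passing to a network only requires tracking the junction coupling through $\Phi[u]$, which enters the fixed-point equation as an $x$-independent coefficient of the rapidly decaying $e^{-x/\mu_{i}}$ and therefore contributes nothing new to the far-field analysis.
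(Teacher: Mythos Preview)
Your proposal is correct and is precisely the adaptation of \cite{BB1973}*{Lemma $3$} that the paper invokes without detail; the paper gives no proof of its own beyond that citation. Your key device---the splitting $\int_{0}^{\infty}=\int_{0}^{x/2}+\int_{x/2}^{\infty}$ to separate the exponentially small far-kernel contribution from the near-diagonal part controlled by $\sup_{y\geq x/2}|u_{i}(y,\cdot)|$---is the standard mechanism from \cite{BB1973}, and your observation that the junction coupling enters only through the $e^{-x/\mu_{i}}$ prefactor is exactly the new ingredient needed on a network.
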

\qed \\ 
We can now begin studying the $H^{1}$ theory of \eqref{eqn:nl_y_junction_sys}. Note that we use a model-dependent energy norm that is equivalent to the usual $H^{1}$ norm. 
\begin{defn} The \textbf{energy} $E(t)$ of the solution $u(x,t)$ to \eqref{eqn:nl_y_junction_sys}  is defined by
\begin{equation}
E(t) \doteq \frac12 \| u \|_{H^{1}\left(X\right)}^2 = \frac12 \sum_{i} \int_{0}^{\infty} |u_{i}|^2 + \mu_{i}^2 \ |u_{i,x}|^2 \ \diff x, 
 \end{equation}
provided all of the integrals are finite. 
\end{defn}
Next, we exhibit conditions under which $u(x,t)$ lies in $H^{1}\left(X\right)$ for all time and determine the evolution of $u$'s energy. 
\begin{thm}\label{thm:energy_X} If $\varphi\in H^{1}\left(X\right) \cap \left(C^{2}_{b}[0,\infty)\right)^{N}$, then $u(\cdot, t)\in H^{1}\left(X\right) \ \forall \ t \in [0,T]$. Further, for such $\varphi$, we have that the energy of $u(x,t)$ satisfies
\begin{equation}\label{eqn:energy_ode}
\frac{\diff E}{\diff t} = -h^{2}(t)\ \left[\sum_{i}\sigma_{i}\left(\frac{\alpha_{i}}{2}+ \frac{\gamma_{i}}{(p+1)(p+2)}h^{p}(t)\right)\right] -  \sum_{i} \nu_{i} \int_{0}^{\infty} |u_{i,x}|^2 \ \diff x,
\end{equation}
where as above $h(t)=u_{1}(0,t)=\cdots =u_{N}(0,t)$. 
\end{thm}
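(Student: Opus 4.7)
This theorem makes two assertions: first, the local solution from Theorem \ref{thm:gbbmb_lwp} remains in $H^{1}(X)$ throughout its existence interval; second, its squared $H^{1}$-norm evolves according to \eqref{eqn:energy_ode}. My plan is to derive the evolution formula first via a formal computation and then propagate the $H^{1}$-regularity by re-examining the fixed-point formula \eqref{eqn:fixed_y_junc_i} in a slightly richer function space.

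For the energy computation, apply the change of variables $x\mapsto -x$ on $e_{1}$ from Section \ref{s:lwp} so that every edge is identified with $[0,\infty)$ and the advective flux on edge $i$ acquires the sign $\sigma_{i}$. I would multiply the equation for $u_{i}$ by $u_{i}$, integrate over $[0,\infty)$, and integrate by parts in each term. Lemma \ref{lemma:nullity} guarantees that boundary contributions at $x=+\infty$ vanish, and the identity $u_{i}\partial_{x}f_{i}(u_{i}) = \partial_{x}\bigl[\frac{\alpha_{i}}{2}u_{i}^{2}+\frac{\gamma_{i}}{p+2}u_{i}^{p+2}\bigr]$ handles the nonlinear flux term. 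The edgewise identity so obtained is
\begin{equation*}
\tfrac{1}{2}\tfrac{\diff}{\diff t}\int_{0}^{\infty}\!\!\left(u_{i}^{2}+\mu_{i}^{2}u_{i,x}^{2}\right)\diff x = \sigma_{i}\!\left[\tfrac{\alpha_{i}}{2}h^{2}+\tfrac{\gamma_{i}}{p+2}h^{p+2}\right] - h(t)\bigl[\mu_{i}^{2}u_{i,xt}+\nu_{i}u_{i,x}\bigr]_{x=0} - \nu_{i}\int_{0}^{\infty}\!\!u_{i,x}^{2}\,\diff x.
\end{equation*}
Summing in $i$ and invoking \eqref{eqn:mass_cons_Y} to replace $\sum_{i}[\mu_{i}^{2}u_{i,xt}+\nu_{i}u_{i,x}]_{x=0}$ by $\sum_{i}\sigma_{i}f_{i}(h) = h\sum_{i}\sigma_{i}\bigl[\alpha_{i}+\tfrac{\gamma_{i}}{p+1}h^{p}\bigr]$ produces the algebraic cancellations $\tfrac{\alpha_{i}}{2}-\alpha_{i}=-\tfrac{\alpha_{i}}{2}$ and $\tfrac{\gamma_{i}}{p+2}-\tfrac{\gamma_{i}}{p+1}=-\tfrac{\gamma_{i}}{(p+1)(p+2)}$, which yield exactly \eqref{eqn:energy_ode}.

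To propagate $H^{1}$-regularity, I would revisit \eqref{eqn:fixed_y_junc_i}. The term $e^{-\nu_{i}t/\mu_{i}^{2}}\varphi_{i}(x)$ is in $H^{1}(0,\infty)$ by hypothesis; the junction correction proportional to $e^{-x/\mu_{i}}$ is manifestly $H^{1}$ with coefficient bounded in $t$; and the kernels $K(x,y), G(x,y)$ are sums of exponentials of $x\pm y$ to which Young's convolution inequality applies, sending $L^{2}_{y}$ data to $H^{1}_{x}$ output with constants depending only on $\mu_{i}$ and $\nu_{i}$. Because $f_{i}$ is a polynomial of degree $p+1$, $\|f_{i}(u_{i}(\cdot,s))\|_{L^{2}}$ is controlled by $\|u_{i}(\cdot,s)\|_{H^{1}}$ via the Sobolev embedding $H^{1}\hookrightarrow L^{\infty}$. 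Rerunning the contraction-mapping argument of Theorem \ref{thm:gbbmb_lwp} in $C\bigl(0,T;H^{1}(X)\bigr)$ (shrinking $R$ and $T$ from \eqref{eqn:fp_require} further if necessary to accommodate the added norm) then shows that the fixed point inherits the $H^{1}$ regularity of its initial data on the same local time interval.

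The main obstacle is reconciling the local-existence framework, built in $C_{b}(X)$, with the $H^{1}$-level regularity needed for the energy identity to be meaningful. Once the kernel operators are verified to preserve square-integrability so that $L^{2}$-bounds close along the Picard iteration, the algebraic manipulations that yield \eqref{eqn:energy_ode} are direct, and the decisive structural input is the mass-conservation junction condition \eqref{eqn:mass_cons_Y}, which is precisely what eliminates the dispersive boundary term $h(t)\sum_{i}\mu_{i}^{2}u_{i,xt}(0,t)$ that would otherwise render the energy identity useless for bounding $E(t)$ in terms of $E(0)$.
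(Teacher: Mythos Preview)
Your energy computation is exactly the paper's: multiply by $u_{i}$, integrate by parts, sum over edges, invoke \eqref{eqn:mass_cons_Y}, and collect the two algebraic cancellations you identified. The difference is in how you propagate $H^{1}$-regularity, and here your route diverges from the paper's in a way that leaves a small gap.

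The paper does \emph{not} rerun the contraction mapping in $H^{1}$. Instead it truncates each spatial integral to $[0,L]$, performs the same integration by parts there (so all integrals are automatically finite), and uses the \emph{already-available} $C_{b}$-bound from Theorem \ref{thm:gbbmb_lwp} to control $h(t)$ uniformly on $[0,T]$. Integrating the truncated identity in time and then sending $L\to\infty$ (using Lemma \ref{lemma:nullity} to kill the boundary terms at $x=L$) yields $\|u(\cdot,t)\|_{H^{1}(X)}^{2}\leq \|\varphi\|_{H^{1}(X)}^{2}+C$ with $C=C\bigl(T,\|u\|_{C(0,T;C_{b}(X))}\bigr)$, valid on the \emph{entire} interval $[0,T]$. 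Only after this is the energy identity \eqref{eqn:energy_ode} written down rigorously.

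Your approach---redoing the fixed point in $C\bigl(0,T;H^{1}(X)\bigr)$---is perfectly sound as a way to show the solution is $H^{1}$ for \emph{some} time, but as you yourself note, the contraction constants change and $T$ may shrink. Since the $H^{1}$-contraction time is governed by $\|\varphi\|_{H^{1}}\geq \|\varphi\|_{C_{b}}$, it is in general strictly smaller than the $T$ produced by Theorem \ref{thm:gbbmb_lwp}, so your argument as written does not cover the full interval claimed in the statement. You can patch this by combining the two ideas: once $H^{1}$ holds on a subinterval, feed the $C_{b}$-bound on $h$ into \eqref{eqn:energy_ode} to get $\diff E/\diff t\leq C'$ uniformly, and then continue. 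But the paper's truncation argument gets there in one step without any iteration, which is its main advantage.
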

\begin{proof}
We follow the proof of \cite{BB1973}*{Lemma $4$}. Pick any $L>0$, then multiply both sides of gBBMB on each edge by $2u_{i}$ and integrate with respect to $x$ over $[0,L]$ to see that 
\begin{align*}
0 &= \int_{0}^{L}  \partial_{t}\left(|u_{i}|^2+\mu_{i}^2|u_{i,x}|^2\right) + 2\sigma_{i}\ \partial_{x}\left(\frac{\alpha_{i}}{2} u_{i}^2 + \frac{\gamma_{i}}{p+2} u_{i}^{p+2}\right) +2\nu_{i} \ |u_{i,x}|^2 \ \diff x -2\left[\mu_{i}^2 u_{i}u_{i,xt}+\nu_{i}u_{i}u_{i,x}\right]_{0}^{L}
\\ &= \int_{0}^{L}  \partial_{t}\left(|u_{i}|^2+\mu_{i}^2|u_{i,x}|^2\right) +2\nu_{i} \ |u_{i,x}|^2 \ \diff x + 2\left[\sigma_{i}\left(\frac{\alpha_{i}}{2} u_{i}^2 + \frac{\gamma_{i}}{p+2} u_{i}^{p+2}\right) - u_{i}\left( \mu_{i}^2 u_{i,xt}+\nu_{i}u_{i,x}\right)\right]_{0}^{L}. 
\end{align*}
Adding up the above expressions for $i=1,...,N$ and using the mass conservation junction condition \eqref{eqn:mass_cons_Y}, we obtain 
\begin{align}
\frac12 \sum_{i} \int_{0}^{L}  \partial_{t}\left(|u_{i}|^2+\mu_{i}^2|u_{i,x}|^2\right) \ \diff x \nonumber
&= h^{2}(t)\ \left[\sum_{i}\sigma_{i}\left(\frac{\alpha_{i}}{2}+ \frac{\gamma_{i}}{p+2}h^{p}(t)\right)\right]
\\ \nonumber &\phantom{=} -h(t) \ \left[\sum_{i}\sigma_{i}\left(\alpha_{i}h(t)+ \frac{\gamma_{i}}{p+1}h^{p+1}(t)\right)\right]   - \sum_{i}\int_{0}^{L}  \nu_{i} \ |u_{i,x}|^2 \ \diff x 
\\  \nonumber &\phantom{=} -\sum_{i}  \left\{\sigma_{i}\left(\frac{\alpha_{i}}{2} u_{i}^2 + \frac{\gamma_{i}}{p+2} u_{i}^{p+2}\right)- u_{i}\left( \mu_{i}^2 u_{i,xt}+\nu_{i}u_{i,x}\right)\right\}\bigg|_{x=L}
\\ &= - h^{2}(t)\ \left[\sum_{i}\sigma_{i}\left(\frac{\alpha_{i}}{2}+ \frac{\gamma_{i}}{(p+1)(p+2)}h^{p}(t)\right)\right]   - \sum_{i}\int_{0}^{L}  \nu_{i} \ |u_{i,x}|^2 \ \diff x   \nonumber
\\ &\phantom{=} -\sum_{i}  \left\{\sigma_{i}\left(\frac{\alpha_{i}}{2} u_{i}^2 + \frac{\gamma_{i}}{p+2} u_{i}^{p+2}\right)- u_{i}\left( \mu_{i}^2 u_{i,xt}+\nu_{i}u_{i,x}\right)\right\}\bigg|_{x=L}.  \label{eqn:L_E_cons}
\end{align}
Since $u$ is bounded on $[0,T]\times X$, $h(t)=u(0,t)$ is bounded on $[0,T]$. Additionally, by Lemma \ref{lemma:nullity} all of the terms in curly braces in \eqref{eqn:L_E_cons} vanish as $L\rightarrow \infty$ uniformly in $t$. Consequently, \eqref{eqn:L_E_cons} indicates that there exists $C\geq 0$ depending on $T$, $\sup_{[0,T]} |h(t)| \ \leq \|u\|_{C\left(0,T, C_{b}\left(X\right)\right)}$, and the coefficients of the PDE such that
\begin{align}\label{eqn:L_E_cons_prime}
\lim_{L\rightarrow \infty} \sum_{i} \int_{0}^{L}  |u_{i}(x,t)|^2+\mu_{i}^2|u_{i,x}(x,t)|^2 \ \diff x \leq \| \varphi \|_{H^{1}\left(X\right)}^2 + C, 
\end{align}
By hypothesis, $u(\cdot,t)\in H^{1}\left(X\right) \ \forall \ t\in [0,T]$. Accordingly, we can go back to \eqref{eqn:L_E_cons} and take $L\rightarrow \infty$ to obtain the formula \eqref{eqn:energy_ode}. 
\end{proof}
Physically, \eqref{eqn:energy_ode} tells us that any change in the solution's energy is due to either viscoelastic damping or movement through the central junction. At first glance, however, it is not clear whether we can expect energy to be gained or lost at the junction. Intuitively, we expect the latter: the real motion of a fluid at such a junction is likely to involve some sloshing against the walls, hence energy is drained from the flow due to friction. For special parameter values and networks we can guarantee that, at the very least, energy is never gained at the junction. Further, this is enough to obtain a solution to gBBMB on $X$ valid for all \emph{positive} times (of course, since we include a dissipative term, we do not intuitively expect even a local solution to exist for negative times). We state these global well-posedness results in the next two corollaries:
\begin{cor}\label{cor:GWP_p_even} If $\sum_{i}\sigma_{i}\alpha_{i}, \ \sum_{i}\sigma_{i}\gamma_{i} \geq 0$, and $p$ is even, then the solution to \eqref{eqn:nl_y_junction_sys} valid up to time $T$ has non-increasing energy, and can be extended to a unique global-in-time solution $u\in C_{b}\left(\left[0, \infty\right), H^{1}\left(X\right)\right)$.
\end{cor}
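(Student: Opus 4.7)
The strategy is to combine the energy identity from Theorem \ref{thm:energy_X} with a bootstrap argument driven by Theorem \ref{thm:gbbmb_lwp}. I would first show that the right-hand side of \eqref{eqn:energy_ode} is pointwise non-positive on $[0,T]$. Since $p$ is even, $h^{p}(t)\geq 0$ for every $t$, and the bracketed expression can be rewritten as
\[
\sum_{i}\sigma_{i}\!\left(\tfrac{\alpha_{i}}{2}+\tfrac{\gamma_{i}}{(p+1)(p+2)}h^{p}(t)\right)
\;=\; \tfrac{1}{2}\sum_{i}\sigma_{i}\alpha_{i} \;+\; \tfrac{h^{p}(t)}{(p+1)(p+2)}\sum_{i}\sigma_{i}\gamma_{i},
\]
which is $\geq 0$ by the hypotheses on the $\alpha_{i}$'s and $\gamma_{i}$'s. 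Multiplying by $-h^{2}(t)\leq 0$ yields a non-positive first term on the right of \eqref{eqn:energy_ode}, while the viscous term $-\sum_{i}\nu_{i}\int_{0}^{\infty}|u_{i,x}|^{2}\,\diff x$ is manifestly non-positive. Therefore $dE/dt\leq 0$ on $[0,T]$, which gives both the non-increasing energy claim and the uniform bound $\|u(\cdot,t)\|_{H^{1}(X)}\leq\|\varphi\|_{H^{1}(X)}$.

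The Sobolev embedding $H^{1}(X)\hookrightarrow C_{b}(X)$ then yields a constant $M$ depending only on $\|\varphi\|_{H^{1}(X)}$ with $\|u(\cdot,t)\|_{C_{b}(X)}\leq M$ for every $t\in[0,T]$. Crucially, inspection of \eqref{eqn:fp_require} shows that the local existence time furnished by Theorem \ref{thm:gbbmb_lwp} depends only on the $C_{b}(X)$-norm of the initial datum, so there exists a uniform lower bound $\tau=\tau(M)>0$ on the length of any local existence interval produced by restarting from a datum whose $C_{b}(X)$-norm is at most $M$. The bootstrap is now standard: at $t=T$, the regularity $u|_{e_{i}}\in\mathcal{B}^{1,2}_{T}$ supplied by Theorem \ref{thm:gbbmb_lwp} together with $u(\cdot,T)\in H^{1}(X)$ verifies the hypotheses of Theorem \ref{thm:gbbmb_lwp} and Theorem \ref{thm:energy_X} at the new initial time, so we obtain a solution on $[T,T+\tau]$ whose energy at $T+\tau$ is no larger than $E(T)\leq E(0)$. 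Iterating, we concatenate solutions to produce a $u\in C_{b}([0,\infty);H^{1}(X))$, with uniqueness inherited from the uniqueness on each subinterval.

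The main subtlety to verify is that every restarted Cauchy problem really does satisfy the hypotheses of Theorem \ref{thm:gbbmb_lwp}, namely continuity at the junction and $C^{2}_{b}$ regularity on each edge; continuity is immediate from $u\in C(0,T;C_{b}(X))$, and the $C^{2}_{b}$ regularity follows from $u|_{e_{i}}\in\mathcal{B}^{1,2}_{T}$ combined with the explicit form of \eqref{eqn:fixed_y_junc_i}. Once this compatibility is checked, the iteration is mechanical, so beyond the sign manipulation in the first paragraph I anticipate no genuine obstacle.
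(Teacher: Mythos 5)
Your proposal is correct and follows essentially the same route as the paper: verify $dE/dt\leq 0$ from the energy identity of Theorem \ref{thm:energy_X} under the sign hypotheses, use Sobolev embedding to get a uniform $C_{b}\left(X\right)$ bound, and iterate the local existence theorem with a time step bounded below to reach arbitrary times. The only cosmetic differences are that you spell out the sign computation and the uniform lower bound on the restart time more explicitly than the paper does, and you attribute uniqueness to the subinterval uniqueness rather than to the paper's cited energy argument; neither changes the substance.
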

\begin{proof}
Applying Theorem \ref{thm:energy_X}, we see that 
$$ \diff E / \diff t \leq 0$$ 
if the parameters of the problem are chosen according to the hypothesis. Therefore, we can extend the solution out to a further time $T'>T$ by defining new initial conditions $\tilde{\varphi}\doteq u(x,T) \in H^{1}\left(X\right)\cap \left(C^{2}_{b}(0,\infty)\right)^{N}$ and applying our local well-posedness result once more. Of course, this extended solution has the same regularity as the solution on $[0,T]$. Since $ \diff E / \diff t \leq 0$, Sobolev embedding implies
$$ \|u(x,T) \|_{C_{b}\left(X\right)} \ \lesssim \| \varphi\|_{H^{1}\left(X\right)} < \infty $$
hence $T' > T$.  Since the energy of the solution thus obtained remains non-increasing,  we may iterate the procedure described above as much as we like, obtaining a sequence of existence times tending to $+\infty$ , establishing existence of a global solution.  Uniqueness of the global solution follows from a classical energy argument along the lines of  \cite{BBM1972} \S 4, using the junction conditions as in Theorem \ref{thm:energy_X}. 
\end{proof}
\begin{cor}\label{cor:GWPtwoedge}
If $\sum_{i}\sigma_{i}\alpha_{i}, \ \sum_{i}\sigma_{i}\gamma_{i} =0$, then the solution to \eqref{eqn:nl_y_junction_sys} valid up to time $T$ has non-increasing energy, and can be extended to a unique global-in-time solution $u\in C_{b}\left(\left[0,\infty\right), H^{1}\left(X\right)\right)$.  
\end{cor}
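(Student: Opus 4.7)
The plan is to mirror the argument of Corollary \ref{cor:GWP_p_even}, with one simplifying observation: under the hypothesis $\sum_{i}\sigma_{i}\alpha_{i} = \sum_{i}\sigma_{i}\gamma_{i} = 0$, the junction contribution to \eqref{eqn:energy_ode} collapses identically. Specifically, I would first rewrite
\begin{equation*}
\sum_{i}\sigma_{i}\left(\frac{\alpha_{i}}{2} + \frac{\gamma_{i}}{(p+1)(p+2)}\,h^{p}(t)\right) = \tfrac{1}{2}\sum_{i}\sigma_{i}\alpha_{i} + \frac{h^{p}(t)}{(p+1)(p+2)}\sum_{i}\sigma_{i}\gamma_{i},
\end{equation*}
which vanishes for every $t$ irrespective of the sign of $h(t)$ or the parity of $p$. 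Substituting back into Theorem \ref{thm:energy_X} gives
\begin{equation*}
\frac{\diff E}{\diff t} = -\sum_{i}\nu_{i}\int_{0}^{\infty}|u_{i,x}|^{2}\,\diff x \leq 0,
\end{equation*}
so $E$ is non-increasing on $[0,T]$.

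Next I would bootstrap exactly as in Corollary \ref{cor:GWP_p_even}. The bound $E(T)\leq E(0)=\tfrac{1}{2}\|\varphi\|_{H^{1}(X)}^{2}$, combined with the embedding $H^{1}(X)\subseteq C_{b}(X)$, controls $\|u(\cdot,T)\|_{C_{b}(X)}$ uniformly in $T$. Taking $\tilde{\varphi}\doteq u(\cdot,T)$ as new initial data --- whose required $C_{b}^{2}[0,\infty)$ regularity on each edge is inherited from the $\mathcal{B}_{T}^{1,2}$ regularity guaranteed by Theorem \ref{thm:gbbmb_lwp} --- and reapplying Theorem \ref{thm:gbbmb_lwp} yields a continuation on $[T,T+T']$ with existence time $T'\gtrsim(1+\|\varphi\|_{H^{1}(X)})^{-p}$ bounded away from zero. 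Iterating produces a monotone sequence of existence times diverging to $+\infty$, establishing $u\in C_{b}([0,\infty),H^{1}(X))$.

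For uniqueness, I would subtract two candidate global solutions $u,v$ sharing the same data, set $w=u-v$, and rerun the integration-by-parts calculation of Theorem \ref{thm:energy_X} for $w$, again invoking the mass-conservation junction condition \eqref{eqn:mass_cons_Y} to eliminate the boundary contributions. The nonlinear $\gamma_{i}$ terms contribute a factor polynomial in $\|u\|_{C_{b}(X)}$ and $\|v\|_{C_{b}(X)}$, which is controlled uniformly by the global energy bound just obtained; this yields a Gronwall inequality of the form $\tfrac{\diff}{\diff t}\|w\|_{H^{1}(X)}^{2}\leq C\|w\|_{H^{1}(X)}^{2}$, forcing $w\equiv 0$. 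The only step requiring genuine care --- and the nearest thing to an obstacle --- is verifying at each bootstrap iteration that $\tilde{\varphi}$ retains the regularity demanded by Theorem \ref{thm:gbbmb_lwp}; this is routine given the $\mathcal{B}_{T}^{1,2}$ output of the fixed-point construction, but should be checked explicitly.
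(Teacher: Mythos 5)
Your proposal is correct and follows essentially the same route as the paper, which simply invokes the argument of Corollary \ref{cor:GWP_p_even}: the hypothesis makes the junction term in \eqref{eqn:energy_ode} vanish identically, giving $\diff E/\diff t \leq 0$, and the bootstrap and uniqueness steps are unchanged. Your explicit verification that the bracketed sum collapses for all $h(t)$ and all $p$ is exactly the observation the paper leaves implicit.
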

\begin{proof}
Apply the same arguments used to prove Corollary \ref{cor:GWP_p_even}. 
\end{proof}
We emphasize that Corollary \ref{cor:GWPtwoedge} holds regardless of the value of $p$. Note also that, if viscoelasticity is ignored ($\nu_{i}=0 \ \forall \ i$), then energy is actually conserved if the conditions of Corollary \ref{cor:GWPtwoedge} are met. 
\par Are the parameter restrictions imposed by the above corollaries physically meaningful? $\alpha_{i}>0$ is necessary to ensuring long linear waves always move towards $+\infty$ on each edge. Since blood pressure waves are indeed long waves, $\alpha_{i}>0$ is a suitable physical restriction. Recall that we also demand $\gamma_{i}\geq 0$, following the derivation of KdVB and mKdVB by Erbay et al. \cite{EED1992}. Since we do not expect scleroses or stents to cause large changes in the coefficients $\alpha_{i}$ and $\gamma_{i}$ between edges, the constraints $\sum_{i}\sigma_{i}\alpha_{i}\geq0, \ \sum_{i}\sigma_{i}\gamma_{i} \geq 0$ seem to be perfectly reasonable. In particular, these constraints are satisfied in the case $\alpha_{i}\equiv \alpha >0$ and $\gamma_{i}\equiv \gamma \leq 1$. Also, in light of the aforementioned work of Erbay et al., $p=1$ and $p=2$ both correspond to valid asymptotic models of pulsatile flow in viscoelastic tubes. In fact, according to Erbay et al., choosing $p=2$ may in fact be more physically relevant: compared to the $p=1$ model,  the $p=2$ model captures genuinely nonlinear behaviour in a wider variety of viscoelastic materials. Therefore, the hypotheses of both corollaries are definitely of physical relevance.
\begin{remark} A na\"{i}ve first guess as to how \eqref{eqn:energy_ode} can be applied to prove global well-posedness for any odd $p$ fails, as we shall now demonstrate. We may use Sobolev embedding to write
\begin{equation}
\label{eqn:h_embed} |h(t)| \ \lesssim E(t)^{1/2}, 
\end{equation}
hence by \eqref{eqn:energy_ode} we have
\begin{equation}\label{eqn:blowup_ode}
\frac{\diff E}{\diff t} \lesssim E^{p/2+1}. 
\end{equation} 
Since $p\geq 1$, the envelope for energy yielded by the above inequality blows up in finite time. Therefore, the arguments of \cite{BB1973, BonaLuo1995} do not trivially extend to the case of gBBMB on a network for every value of $p$. 
\end{remark}

\section{Numerical Simulations}\label{s:num}
In this section, we describe some elementary numerical simulations of a solitary wave scattering off the junction of a star network with two edges. My intention here is to give an initial push towards understanding the behaviour of the full nonlinear model in some basic test cases, rather than describing a complete and robust numerical scheme. For simplicity, we only consider $p=1$ in the simulations. we focus on determining what initial conditions  and coefficient values $\mu_{i}, \nu_{i}$ allow an anti-solitary wave to be reflected from the junction. 
\par All simulations were written in Python (using Numpy and Scipy). Matplotlib \cite{Matplotlib} and the CMOcean colourmap library \cite{cmocean} were used to make the figures. 

\subsection{Description of Numerical Method}\label{ss:method}
We begin by going over the numerical scheme used to perform the simulations. We work on a finite time interval $[0,T]$ partitioned into uniform intervals of size $\Delta t$. Each edge of our network is identified with $[0,L]$ for some large $L$, and we chop $[0,L]$ into uniform intervals of size $\Delta x$. Superscripts on a function denote a temporal index, and subscripts denote a spatial index. For example, 
$$ u^{n}_{j} \approx u(j\Delta x, n\Delta t). $$
We impose the homogeneous Dirichlet boundary conditions (BCs)
$$u_{i}(t,L)=0 \quad \forall \ i,t.$$
While these BCs keep the numerical routine simple, they also lead to unphysical reflections at the computational boundary. Therefore, our simulations become unreliable once the wave hits the computational boundary. However, they still function well on short-time scales, which is sufficient for understanding wave-junction interactions.  In the future, artificial transparent BCs may be implemented to yield more reliable numerical results, and indeed such BCs have recently been shown to work excellently for the linearized BBM \cite{BMN2018}. 
\subsubsection{Finite Difference Formulation of gBBMB}
In \cite{EM1975,EM1977} Eilbeck and McGuire  investigated several finite difference methods for BBM. They found that the following scheme performed best among those considered: 
\begin{align}
\label{eqn:em_method}
 u^{n+1}_{j-1} - \left(2+\left(\frac{\Delta x}{\mu}\right) ^2 \right)u^{n+1}_{j} +  u^{n+1}_{j+1} & = \frac{\Delta t \Delta x}{\mu^2} \left[\left(\alpha+\gamma u^{n}_{j}\right)\left(u_{j+1}^{n}-u^{n}_{j-1}\right)\right]
 \\ &\phantom{=} + u^{n-1}_{j-1} - \left(2+\left(\frac{\Delta x}{\mu}\right) ^2 \right)u^{n-1}_{j} +  u^{n-1}_{j+1}
\end{align}
This amounts to using a leapfrog approximation of $u_{t}$ and $u_{x}$ and a centred approximation of $1-\mu^2\partial_{x}^2$.  From \cite{EM1975,EM1977}, the above scheme boasts the following features: 
\begin{itemize}
\item  The scheme is second order in both space and time, and stable provided the solution is roughly $\mathcal{O}(1)$ or less.
\item While the scheme is implicit, it does not require the solution of a nonlinear system at each time step. Further, since $A$ only depends on $\mu$ and $\Delta x$, it can be factorized in a pre-processing stage for more efficiency.
\item The scheme preserves the solitary wave solutions of BBM quite well. 
\end{itemize}
To accommodate the diffusive term as naturally as possible, we use the Crank-Nicolson leap-frog time-stepping method for advection-diffusion equations  \cite{ARW1995}. The resulting scheme is second-order in space and time. 
\subsubsection{Adding the Interface Condition}
Now, we illustrate how to add the mass conservation interface condition into the numerical solver. We sample the network at $J+1$ points (including the computational boundaries, where we impose homogeneous Dirichlet BCs) and let $j$ is the spatial index where the interface lies. We then treat the solution as an array 
$$
\begin{bmatrix}
u^{n}_{0} &  u^{n}_{1} & \dots & u^{n}_{j-1} & u^{n}_{j}  & u^{n}_{j+1}  & \cdots & u^{n}_{J-1} &  u^{n}_{J} 
\end{bmatrix}^{\T},
$$
thus we can extract the solution values on each edge according to
\begin{align*}
u_{1}(t^{n}) &\approx \begin{bmatrix}
u^{n}_{0} &  u^{n}_{1} & \dots & u^{n}_{j-1} & u^{n}_{j} \end{bmatrix}^{\T},
\\
u_{2}(t^{n}) &\approx \begin{bmatrix}
 u^{n}_{j}  & u^{n}_{j+1}  & \cdots & u^{n}_{J-1} &  u^{n}_{J} 
 \end{bmatrix}^{\T}.
\end{align*} 
To couple the edges together we must discretize the interface condition \eqref{eqn:MC}. Following the discussion in the previous subsection, it is most sensible to use a leapfrog discretization of the time derivative in \eqref{eqn:MC}, and forward finite differences to discretize the space derivatives. These choices give rise to the linear algebraic equation 
\begin{align*}
\frac{\mu_{1}^2 \left(u^{n+1}_{j-1}-u^{n-1}_{j-1}\right) - \left(\mu_{1}^2 +\mu_{2}^2 \right)\left(u^{n+1}_{j}-u^{n-1}_{j}\right) +\mu_{2}^2 \left(u^{n+1}_{j+1}-u^{n-1}_{j+1}\right)}{2\Delta x\Delta t} &= f_{2}(u^{n}_{j}) - f_{1}(u^{n}_{j})
\\
&\phantom{=} -\frac{\nu_{1} u_{j-1}^{n} -\left(\nu_{1}+\nu_{2}\right)u_{j}^{n}+\nu_{2}u_{j+1}^{n}}{\Delta x}
 \end{align*}
as a discrete substitute for \eqref{eqn:MC}. Note that this means our scheme drops from a second order method to a first order one in a neighbourhood of the interface.

\subsection{Test Cases for a $2$-Edge Network} \label{ss:2edge_sim}
Now, we use the finite difference scheme discussed above to simulate the solution to gBBMB on a network with two edges, each of length $100$ units. Since we are interested in modelling blood pulses, in all of our trials we consider an initial state given by a solitary wave. gBBM possesses the following solitary wave solutions, parameterized by speed $c \in (\alpha, \infty)$ and initial peak location $x_{0}\in \mathbb{R}$:
\begin{subequations}\label{eqn:sw_data}
\begin{align}
W &= \frac{p}{2\mu} \sqrt{1-\frac{\alpha}{c}},
\\
A&= \frac{p}{2\mu W} \sqrt{\frac{2\gamma}{c(p+1)(p+2)}},
\\
u(x,t) &=\left[A\cosh\left(\frac{x-x_{0}-ct}{W}\right)\right]^{-2/p}.
\end{align}
 In each test case, we start with a solitary wave on the incoming edge moving towards the junction.
\par As a concrete measure of our scheme's performance, we look at how well the simulation conserves the mass of the initial solution. We denote the mass of our solution by 
$$ M(t) \doteq  \sum_{i}\int_{e_{i}}u(x,t) \ \diff x, $$
and the percent relative error in mass by
$$ \delta M \doteq 100\ \frac{|M(t)-M(0)|}{M(0)} \ \%. $$
\end{subequations}
For all of the test cases presented in the sequel, we either plot $\delta M$ as a function of time or report its maximum. 

\subsubsection{Small-amplitude Initial Data with Variable Linear Elasticity}\label{ss:test1}
First, we investigate the scattering of a solitary wave with parameters $c=2, x_{0}=60$ as a result of moving between edges with dispersion coefficients satisfying $\mu_{1}=1$ and $\mu_{2}> 1$. The power of the nonlinear term is $p=1$, and the other coefficients of gBBMB are fixed at $\nu_{i}=0$ and $\alpha_{i}=\gamma_{i}=1$. Physiologically, this could correspond to a blood pulse moving from an unhealthy, sclerotic arterial segment to a healthy or stented arterial segment. For these parameter values, we have that the solitary wave's amplitude is $3$. The discretization parameters are $\Delta x=\Delta t=0.025$.
\par Figure \ref{fig:test1_hov} shows filled space-time contour plots of the solution to gBBMB in two cases: $\mu_{2}=1.1$ and $\mu_{2}=1.5$ (recall that the interface between edges is placed at $x=100$ here).  In the first subfigure, the only change to the waveform is the creation of a very small disturbance emerging from the junction in the wake of the wave. In the second subfigure, the wave is noticeably modified by its interaction with the interface, becoming shorter, wider, and slower as it moves into the second edge.  We intend to empirically determine a relationship between the speeds of the incident and transmitted solitary waves using improved numerical methods in future work (see also \cite{Cascaval2012}).  

\par Finally, Figure \ref{fig:test1_mass} shows the precent relative error in total mass for both of these trials. In either case, the error is well below $1\%$, implying that our computer results are indeed physical. Note that the error is largest around time $t=20$, which by inspection of Figure \ref{fig:test1_hov} is precisely when the wave crosses the interface. This is explained by recalling that our scheme is only \emph{first order} in space near $x=100$: more error creeps in when $u(x,t)$ is nonzero in this region. 
\begin{figure}
\vspace{-1cm}
\centering 

\hspace{-2.5cm}
\begin{subfigure}{0.55\textwidth}
\centering
\includegraphics[width=1.3\textwidth]{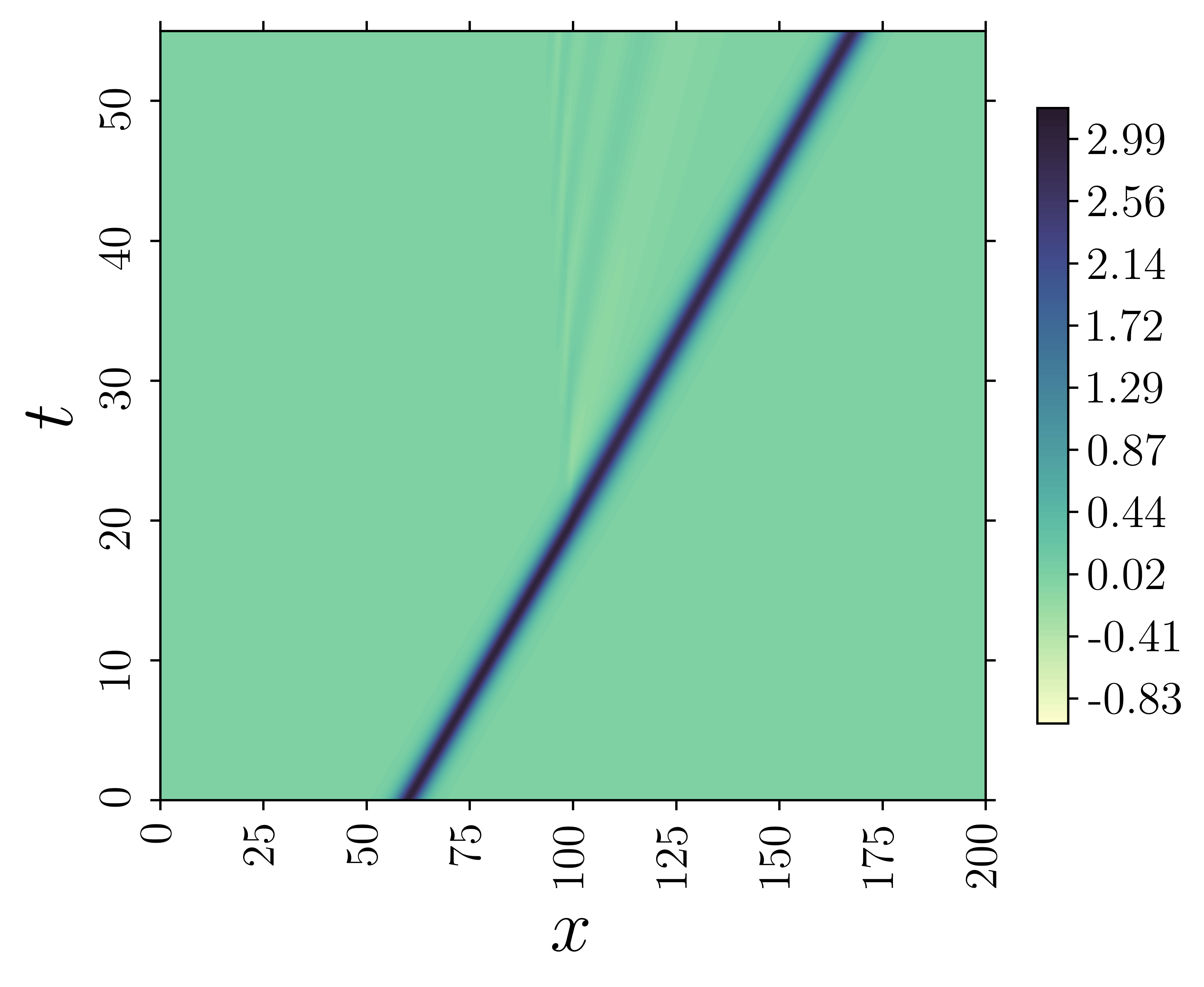}
\caption{$\mu_{1}=1, \mu_{2}=1.1$.}
\end{subfigure}

\hspace{-2.5cm}
\begin{subfigure}{0.55\textwidth}
\centering
\includegraphics[width=1.3\textwidth]{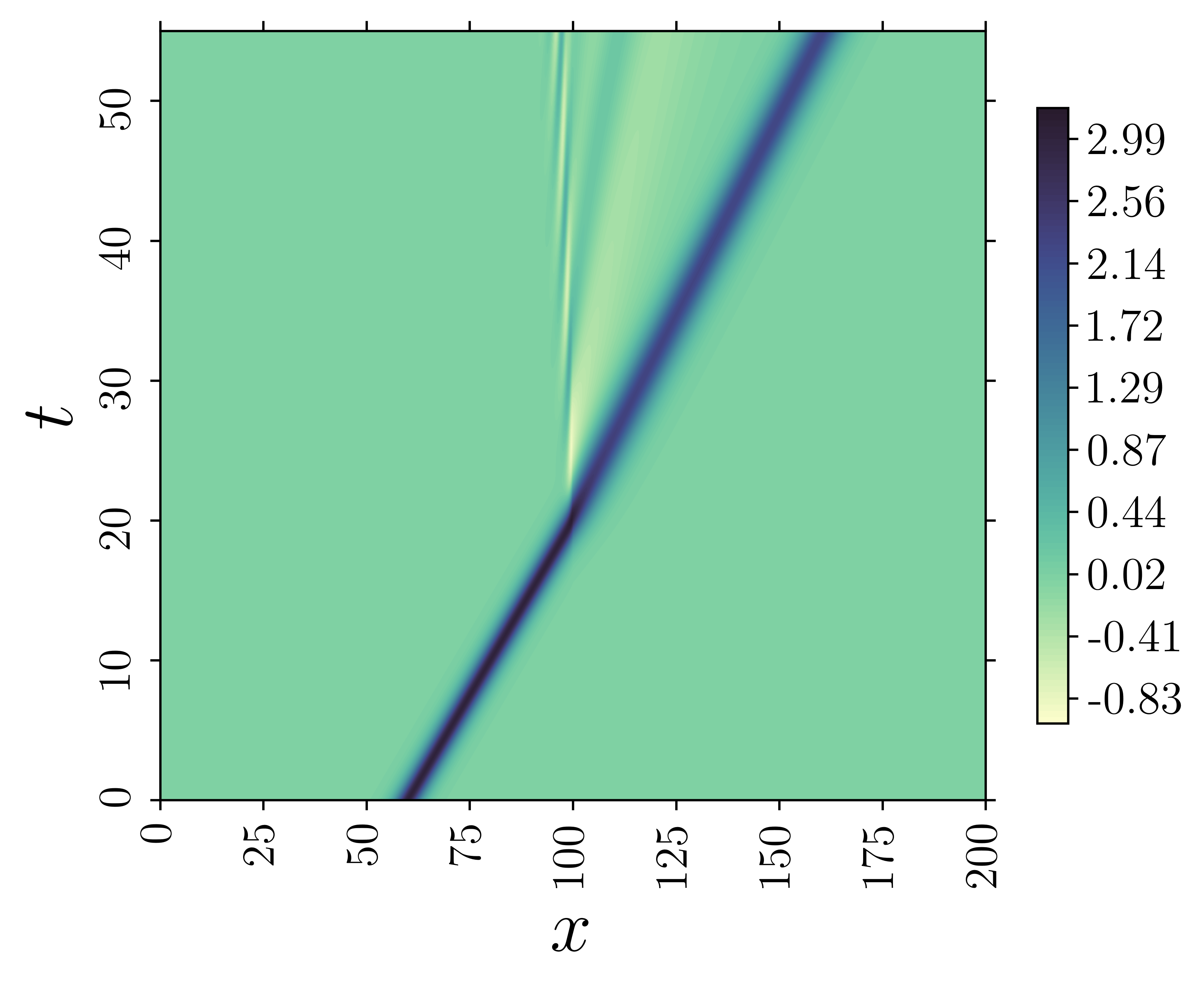}
\caption{ $\mu_{1}=1, \mu_{2}=1.5$. }

\end{subfigure}
\caption{Initial condition is a solitary wave with $c=2, x_{0}=60$, $\nu_{i}=0$, and $\gamma_{i}=1$. The interface is at $x=100$.}
\label{fig:test1_hov}
\end{figure}

\begin{figure}
\hspace{-2cm}
\centering 
\begin{subfigure}{0.45\textwidth}
\includegraphics[width=1.2\textwidth]{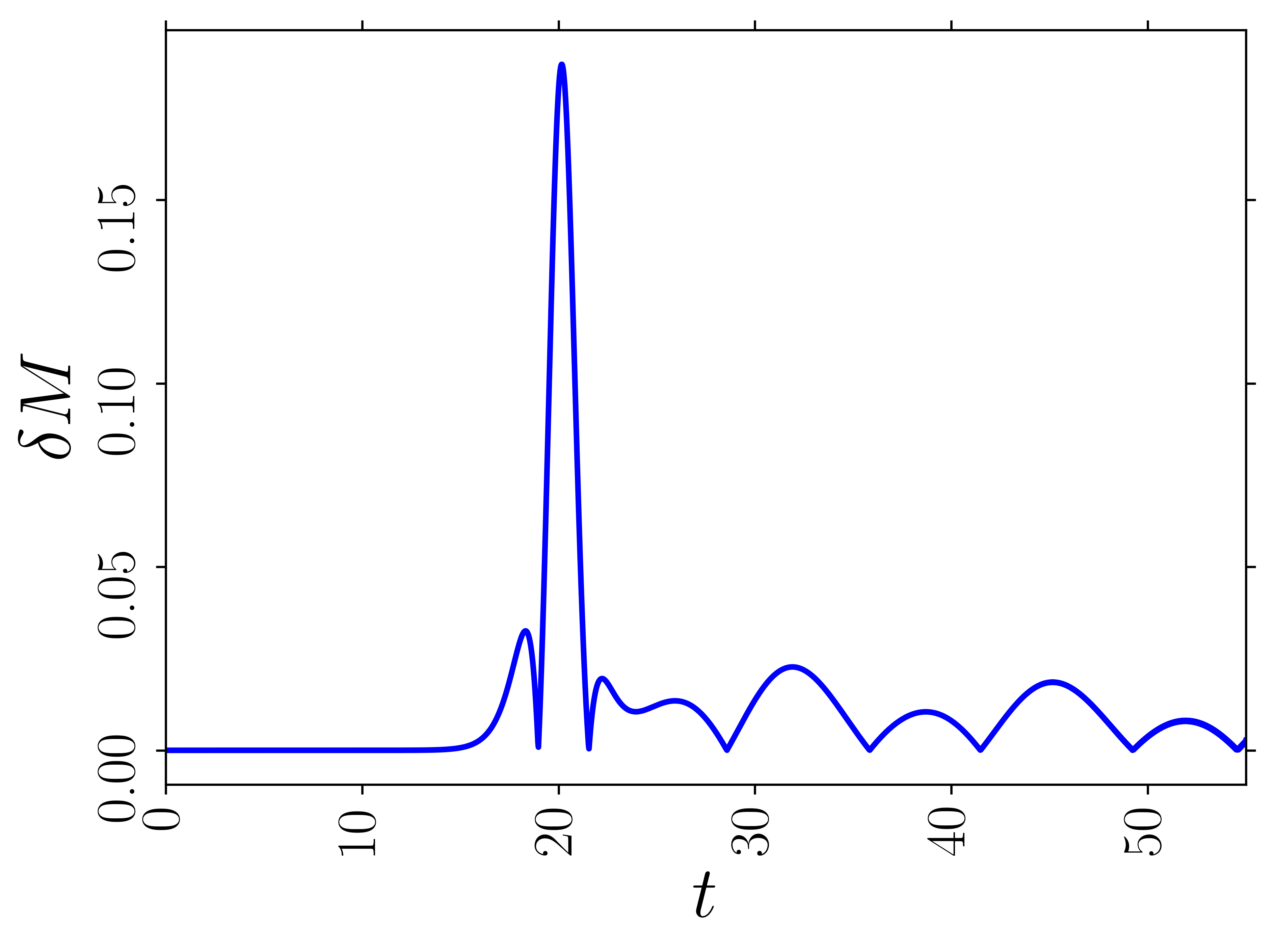}
\caption{$\mu_{1}=1, \mu_{2}=1.1$.}
\end{subfigure}
~~~~~~~~~~
\begin{subfigure}{0.45\textwidth}
\includegraphics[width=1.2\textwidth]{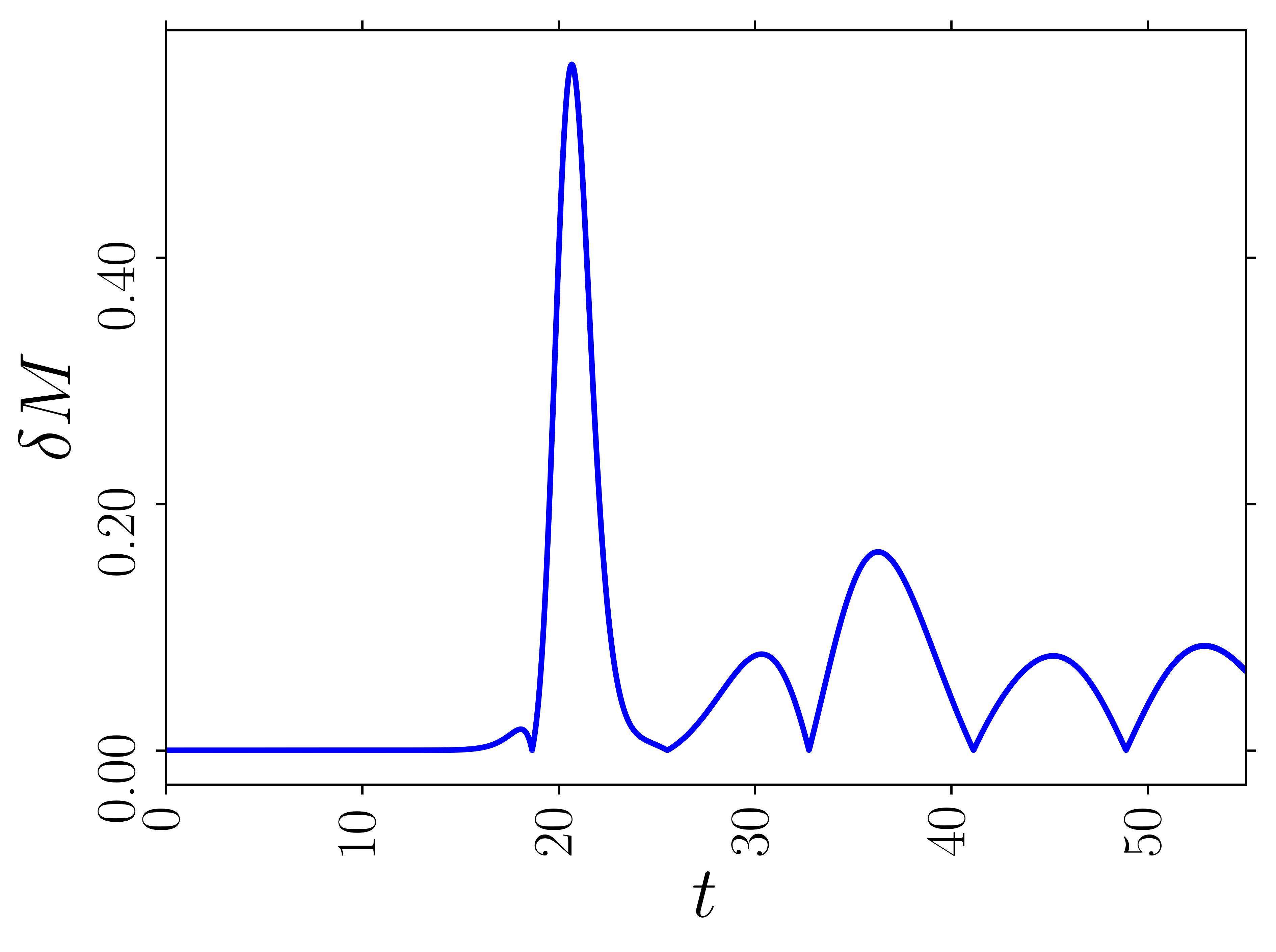}
\caption{$\mu_{1}=1, \mu_{2}=1.5$. }

\end{subfigure}
\caption{Percent relative error in mass for our numerical solution with $c=2, x_{0}=60, \nu_{i}=0,$ and $\gamma_{i}=1$. }
\label{fig:test1_mass}
\end{figure}

\subsubsection{Large-amplitude Initial Data with Variable Linear Elasticity}
Now, we look at the same test case from the previous subsection but with the speed increased to $c=5$. This means that the amplitude of the initial waveform is $12$ units, four times larger than the waves in Figure \ref{fig:test1_hov}.  Figure \ref{fig:test2_hov} displays the results of two simulations with different $\mu_{2}$. In Figure \ref{fig:test2_hov} (a),  $\mu_{2}=1.1$ and we see that, as in the analogous case in the previous subsection, the wave is mostly unperturbed by the interface. For Figure \ref{fig:test2_hov} (b), $\mu_{2}=1.5$ and the picture is very different: a slow anti-solitary wave is reflected from the interface. Thus we have found that the existence of a reflected wave depends on the amplitude of the incident wave as well as the ratio $\mu_{2}/\mu_{1}$.  we remark that, based on other numerical experiments we have completed, $\mu_{2}=1.5$ seems to be the smallest $\mu_{2}$ for which a reflected wave is visible when $c=5$. 
\par The relative error in mass $\delta M$ is less than $0.5\%$ for these test cases. we have omitted showing plots of $\delta M(t)$ here because they are very similar to Figure \ref{fig:test1_mass}: the biggest spike in error occurs when the wave crosses the junction. 
\begin{figure}
\vspace{-1cm}
\centering 

\hspace{-2.5cm}
\begin{subfigure}{0.55\textwidth}
\centering
\includegraphics[width=1.3\textwidth]{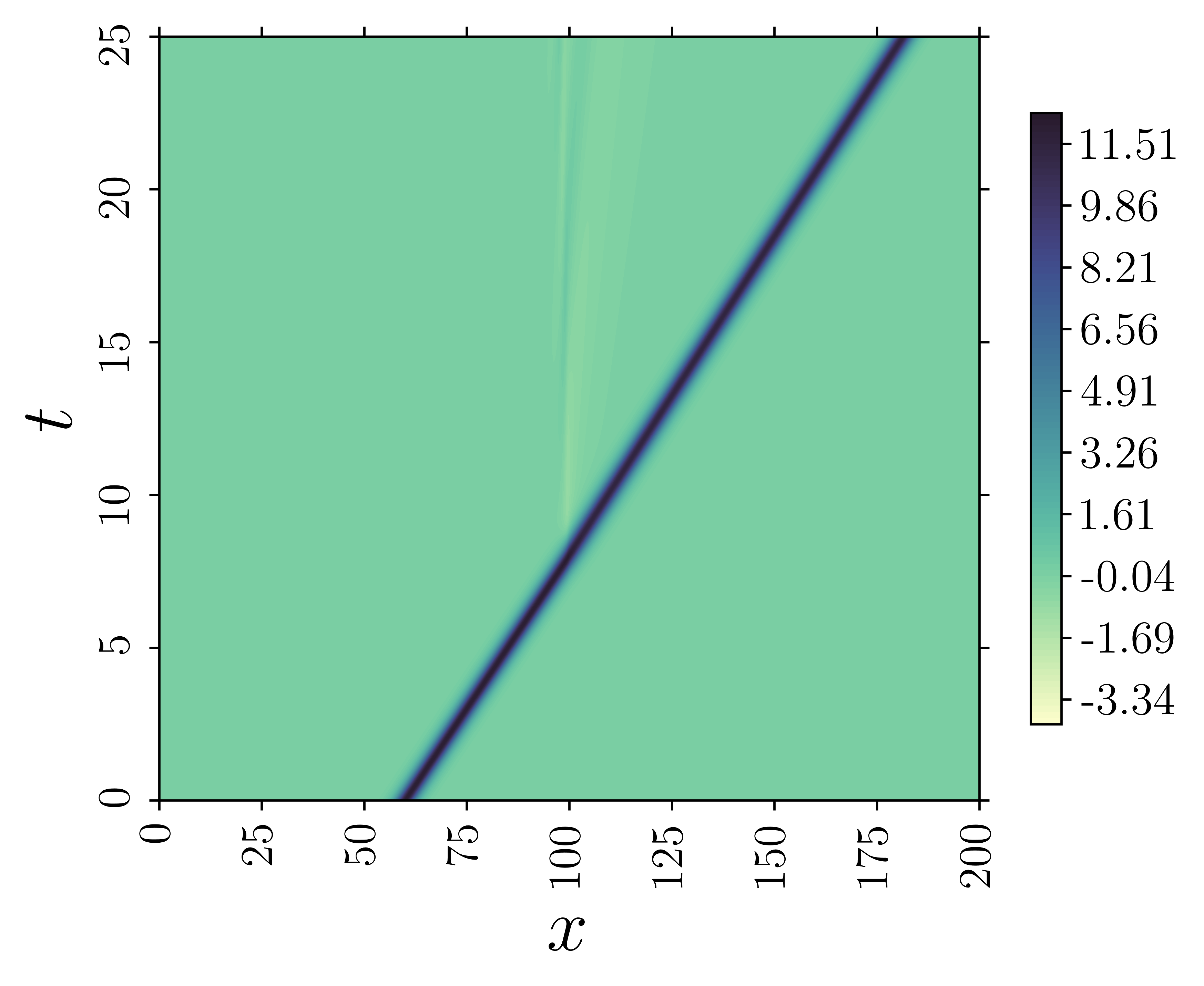}
\caption{$\mu_{1}=1, \mu_{2}=1.1$.}
\end{subfigure}

\hspace{-2.5cm}
\begin{subfigure}{0.55\textwidth}
\centering
\includegraphics[width=1.3\textwidth]{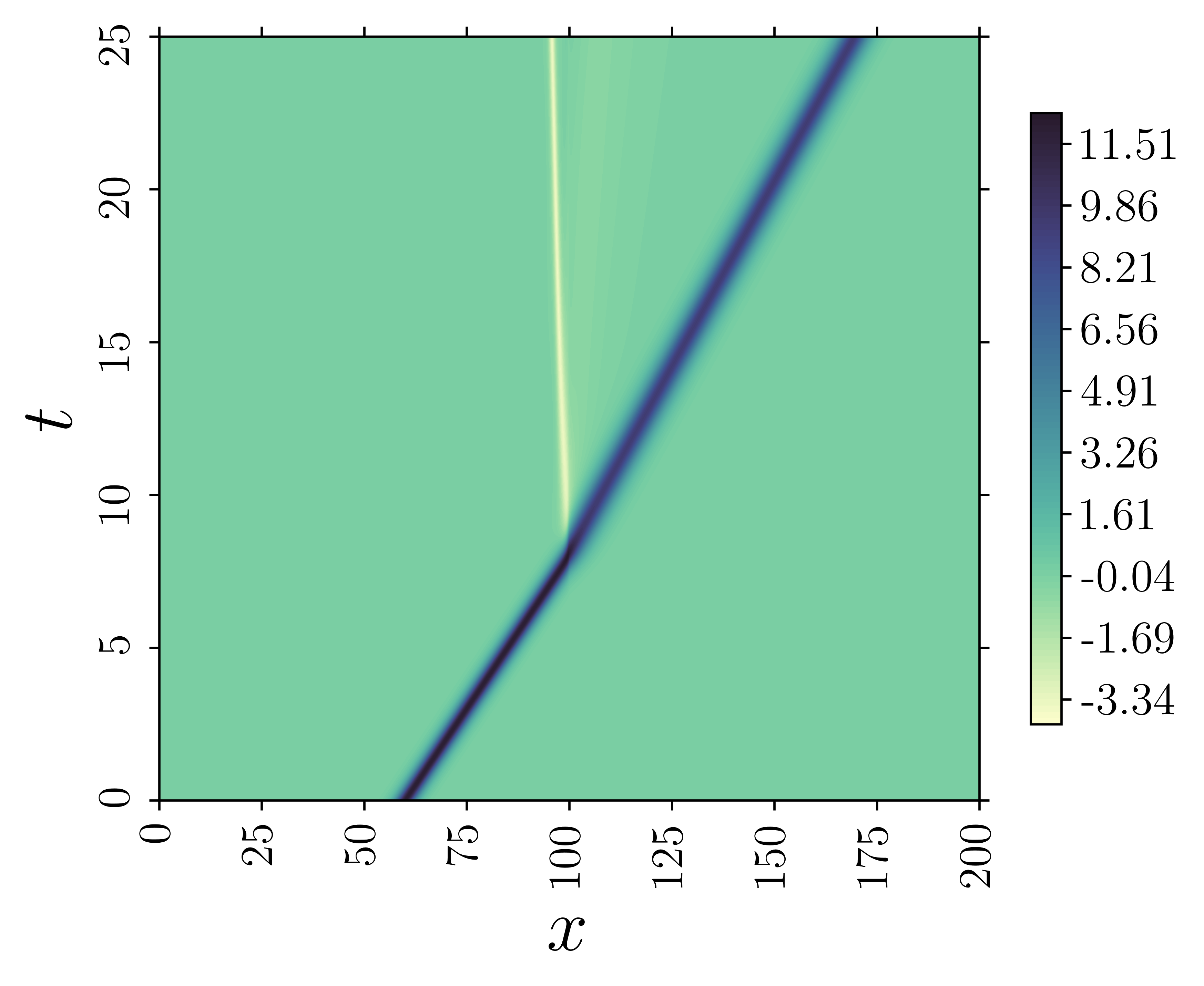}
\caption{ $\mu_{1}=1, \mu_{2}=1.5$. }

\end{subfigure}
\caption{Initial condition is a solitary wave with $c=5, x_{0}=60$. $\nu_{i}=0$ and $\gamma_{i}=1$. The interface is at $x=100$. Note that a slow anti-solitary wave is reflected from the interface in (b), while no reflection occurs in (a). This suggests that reflection depends both on wave amplitude and the ratio $\mu_{2}/\mu_{1}$.}
\label{fig:test2_hov}
\end{figure}



\subsubsection{Large-amplitude Initial Data with Variable Linear Elasticity and Dissipation}
We now investigate how viscoelasticity affects the wave-junction interaction.  Again, we study a solitary wave with $c=5$ and $x_{0}=60$ and suppose that $\alpha_{i}=\gamma_{i}=1$ and $\mu_{1}=1$. This time, we also take $\nu_{1}=1, \nu_{2} =0.1$ and vary $\mu_{2}\geq \mu_{1}$.  Finally, the discretization parameters are $\Delta x=\Delta t=0.025$.
\par The results of two different numerical tests are shown in Figure \ref{fig:test3_hov}. In the first subplot, we have set $\mu_{2}=\mu_{1}=1$, so the damping parameter $\nu_{i}$ is the only coefficient that changes across the interface. Initially, we see that viscoelasticity changes the shape of the solitary wave quite drastically, gradually slowing it down and giving it a long and nearly flat ``tail". The highest peak of the solution ends up being transmitted through the junction as a solitary wave, and as the long flat tail moves through the junction it creates more solitary waves over time. Due to the low damping coefficient in the outgoing edge, these solitary waves essentially preserve their shape and speed over the remainder of the simulation. In Figure \ref{fig:test3_hov} (b), we consider the case where $\mu_{2}$ is increased to $\mu_{2}=1.5$. Recall from Figure \ref{fig:test2_hov} that, when $\nu_{i}=0$, an anti-solitary wave was reflected from the interface. Here, however, there is no reflection at all. Indeed, the only change from Figure \ref{fig:test3_hov} is the speed and width of the transmitted waves, which we have come to expect by now. Thus we have found that viscoelasticity prevent wave reflection off the junction. 
\par We have also plotted $\delta M$ for these viscoelastic simulations in Figure \ref{fig:test3_mass}. The mass error is no larger than $0.2\%$, again indicating a satisfactory level of discrete mass conservation. Note that the graph of $\delta M$ does not drop back down after the initial wave-interface collision, due to the long tail the solitary wave develops. 
\begin{figure}
\vspace{-1cm}
\centering 

\hspace{-2.5cm}
\begin{subfigure}{0.55\textwidth}
\centering
\includegraphics[width=1.3\textwidth]{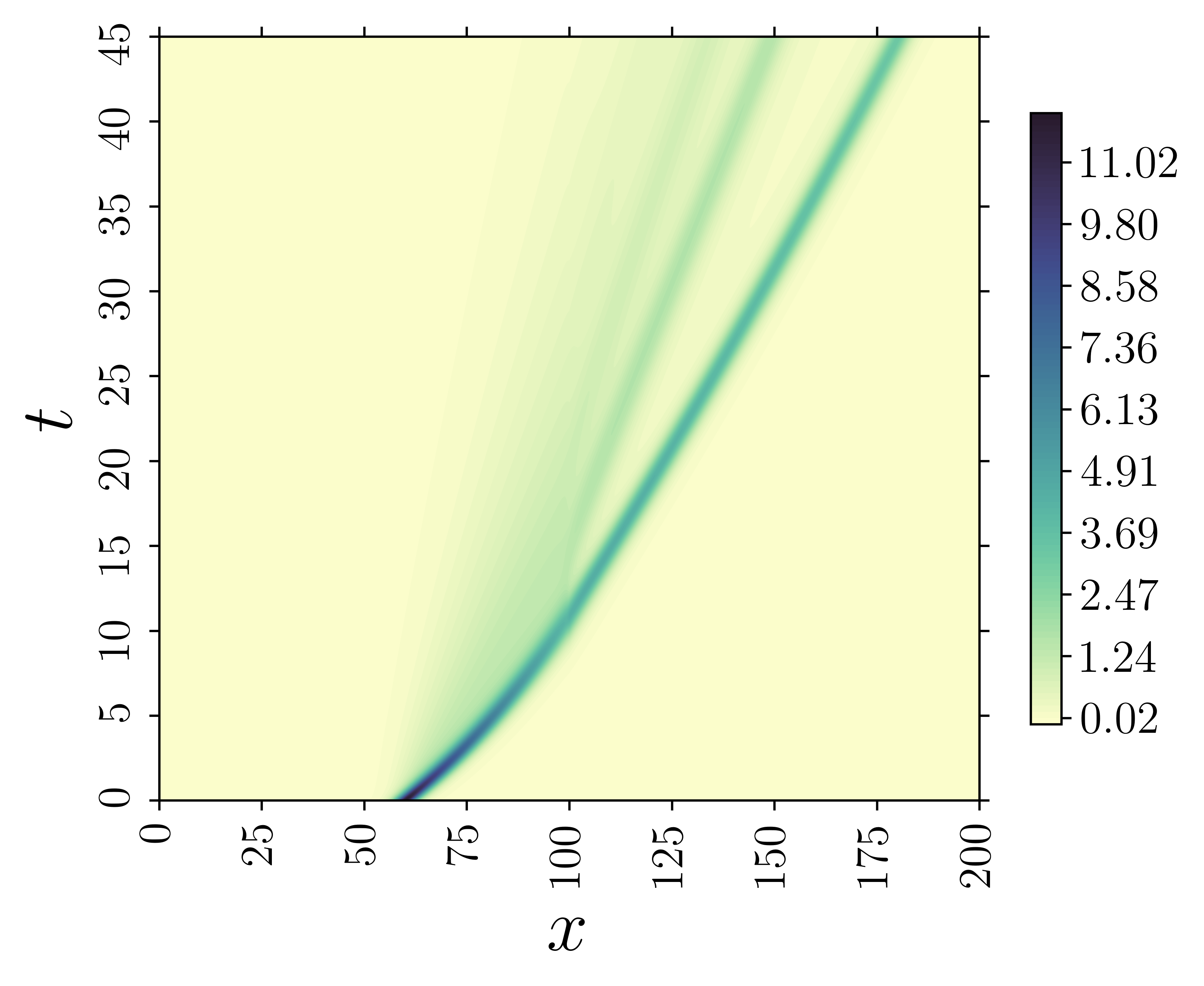}
\caption{$\mu_{1}=\mu_{2}=1, \nu_{1}=1, \nu_{2}=0.1$.}
\end{subfigure}

\hspace{-2.5cm}
\begin{subfigure}{0.55\textwidth}
\centering
\includegraphics[width=1.3\textwidth]{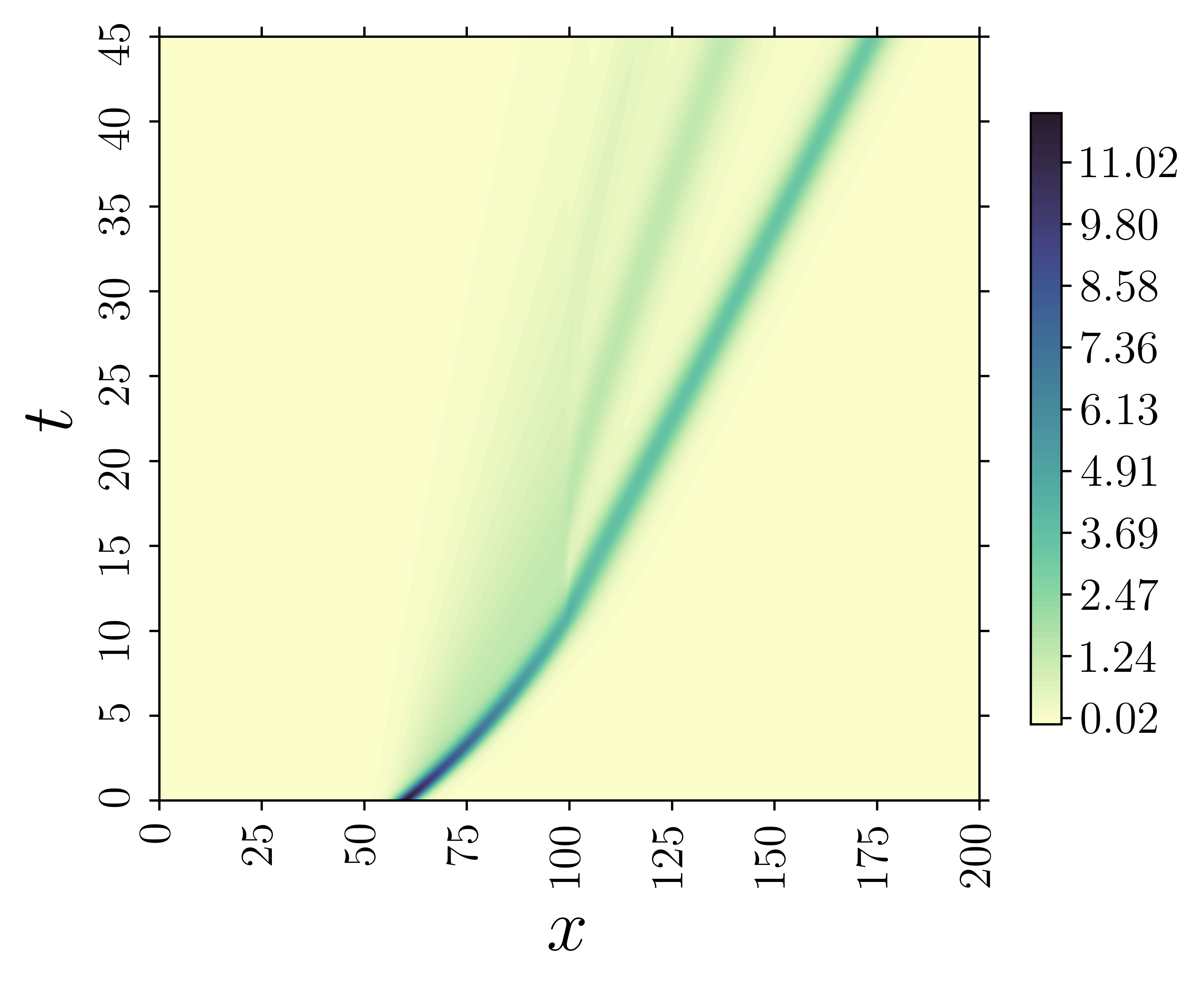}
\caption{$\mu_{1}=1, \mu_{2}=1.5, \nu_{1}=1, \nu_{2}=0.1$.}

\end{subfigure}
\caption{Initial condition is a solitary wave with $c=5, x_{0}=60,  \gamma_{i}=1$. The interface is at $x=100$.}
\label{fig:test3_hov}
\end{figure}

\begin{figure}
\hspace{-2cm}
\centering 
\begin{subfigure}{0.45\textwidth}
\includegraphics[width=1.2\textwidth]{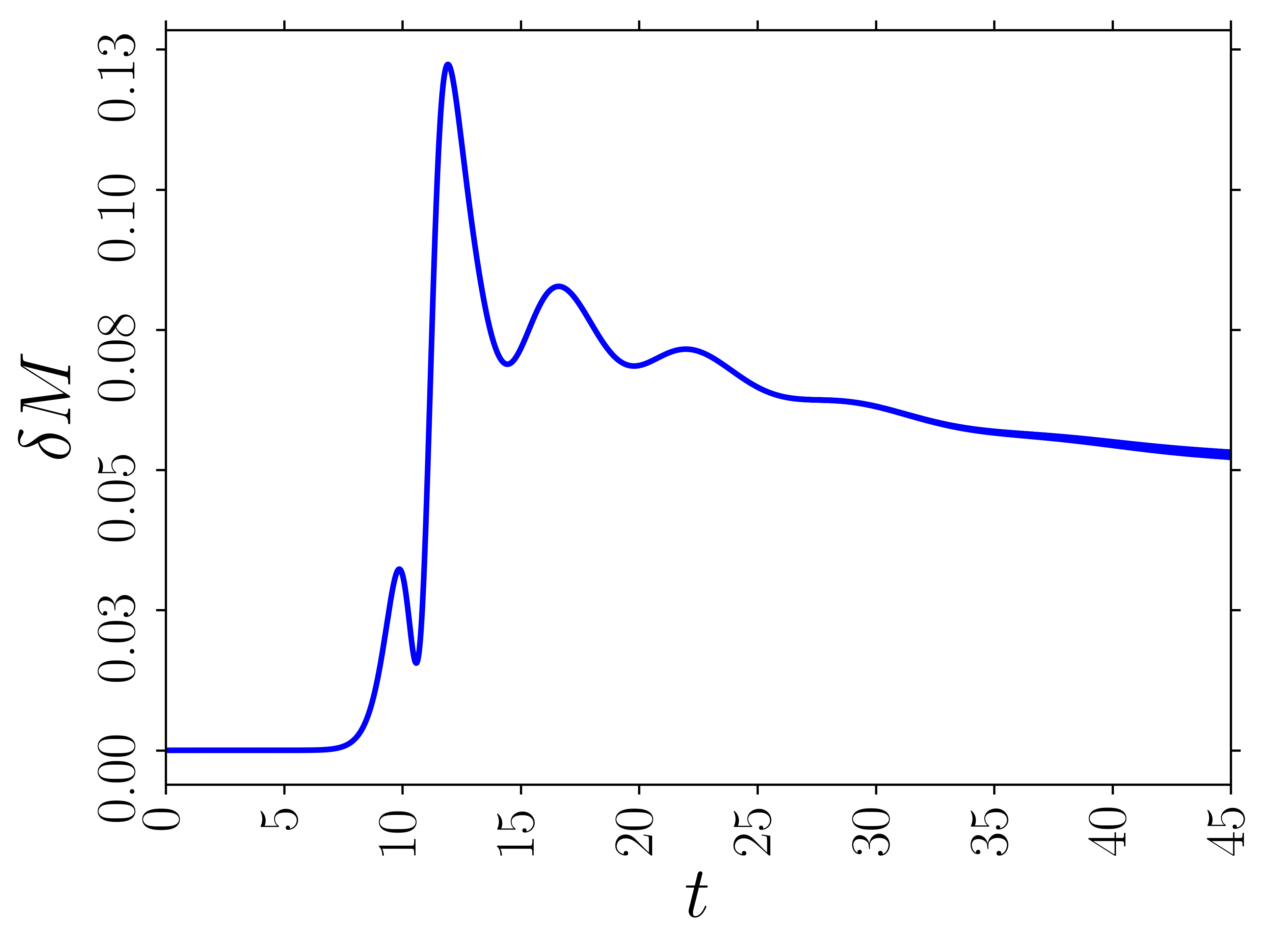}
\caption{$\mu_{1}=\mu_{2}=1, \nu_{1}=1, \nu_{2}=0.1$.}
\end{subfigure}
~~~~~~~~~~
\begin{subfigure}{0.45\textwidth}
\includegraphics[width=1.2\textwidth]{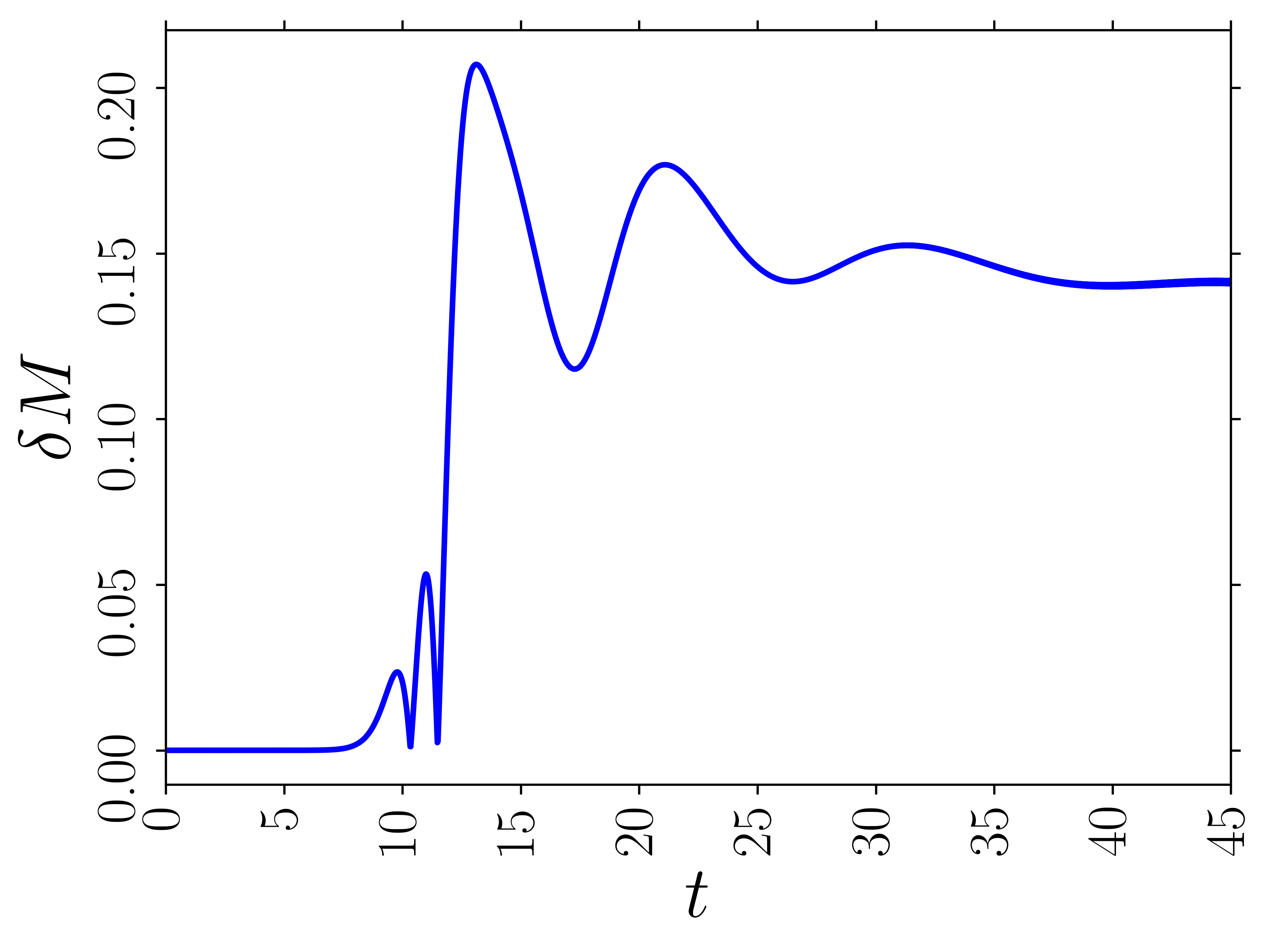}
\caption{$\mu_{1}=1, \mu_{2}=1.5, \nu_{1}=1, \nu_{2}=0.1$.}

\end{subfigure}
\caption{Percent relative error in mass for our numerical solution with $c=5, x_{0}=60, \gamma_{i}=1$. }
\label{fig:test3_mass}

\end{figure}

\section{Acknowledgements} 
The author would like to extend thanks to Fabio Pusateri and the members of his advisory committee for useful discussions. 

\bibliography{stents_math.bib,stents_med.bib,1D_bloodflow.bib,numerical.bib,BBM.bib,networks_math.bib,KdV.bib}{} 
\bibliographystyle{plain}

\end{document}